\newcommand{\norm}[1]{\left\lVert#1\right\rVert}
\newtheorem{theorem}{Theorem}
\newtheorem{lemma}{Lemma}
\newtheorem{definition}{Definition}
\newtheorem{remark}{Remark}
\newtheorem{proposition}{Proposition}
\newtheorem{assumption}{Assumption}
\newcommand{\R}{\mathbb{R}}
\newcommand{\bI}{\mathbf{I}}
\newcommand{\bM}{\mathbf{M}}
\newcommand{\bA}{\mathbf{A}}
\newcommand{\bG}{\mathbf{G}}
\newcommand{\bD}{\mathbf{D}}
\newcommand{\bX}{\mathbf{X}}
\newcommand{\bY}{\mathbf{Y}}
\newcommand{\bQ}{\mathbf{Q}}
\newcommand{\eqdef}{\overset{\text{def}}{=}}
\renewcommand{\algorithmicrequire}{\textbf{Parameters:}}
\renewcommand{\algorithmicensure}{\textbf{Initialization:}}
\begin{document}

		\title{\bf Parallel Stochastic Newton Method}
		


	\author{Mojm\'{i}r   Mutn\'{y}	\\
		ETH Z\"urich\\
		Switzerland\\		
		\and
		Peter Richt\'{a}rik \\ University of Edinburgh \\ United Kingdom }
	
	\maketitle
	
	\begin{abstract}
	We propose a parallel stochastic Newton method (PSN) for minimizing unconstrained smooth convex functions. We analyze the method in the strongly convex case, and give conditions under which acceleration can be expected when compared to its serial counterpart. We show how PSN can be applied to the empirical risk minimization problem, and demonstrate the practical efficiency of the method through numerical experiments and models of simple matrix classes. 
	
		
	\end{abstract}
	\bibliographystyle{plain}
	
%
%
%
	{\footnotesize
		\tableofcontents
	}

	\newpage

	\section{Introduction}
	This work presents a novel parallel algorithm for minimizing an unconstrained strongly convex function.  This work is motivated by the possibility of better leveraging the structure in surrogate approximation, and the need for efficient optimization methods of high dimensional functions. The age of ``Big Data'' demands efficient algorithms to solve optimization problems that arise, for example, in fitting of large statistical models or large systems of equations. These new demands define open questions in algorithm design that make previously efficient algorithms obsolete. 
	
	For example, in this context, classical second order methods such as Newton method are not applicable as the inversion step of the algorithm is too costly ($O(n^3)$) to be performed in big data settings. Due to this reason, first-order algorithms enjoy huge popularity in the field of practicing optimizers, mainly in the field of machine learning. Recent years have shown that randomization and use of second-order information can lead to better convergence properties of algorithms. A prime example of this utilization are coordinate methods; to mention a few: \cite{NSYNC,PCDM,QUARTZ,APPROX}. Another school, more traditionally grouped under term second-order, has seen a plethora of algorithms in recent year with modified LBFGs \cite{Liu1989,Gower2016} methods to sub-sampled Newton methods \cite{Pilanci2015,Roosta-Khorasani2016,Roosta-Khorasani2016a,Erdogdu2015}, which coincide with the direction of this work. 
	
	In the current trend, computations are increasingly becoming parallelized, and the increase in performance is usually achieved by including more computing units solving a problem in parallel. Such architectures demand an efficient design of parallel algorithms that are able to exploit the parallel nature of computing clusters. An effort has been undertaken to provide theoretical certificates on convergence of parallel optimization algorithms, to name a few, \cite{PCDM,HYDRA}, or from class of stochastic methods \cite{Zinkevich2010,Recht2011}.
	
	We chose to extend an existing algorithm that utilizes curvature information, called SDNA \cite{SDNA}, which improves on standard coordinate methods such as SDCA \cite{Shalev-Shwartz2013} (of which parallel versions exist \cite{PCDM},\cite{Richtarik2012}), and present theoretical certificates on parallelization efficiency of this algorithm along with analysis of special matrix classes. These analyses hint to better theoretical and practical than parallel coordinate descent method (PCDM) \cite{PCDM}. 
	
	Further, we focus on big data application with statistical model of learning known as Empirical Risk Minimization (ERM)
	\begin{equation}
		\min_{w \in \mathbb{R}^d} \left[P(w)  = \frac{1}{n}\sum_{i=1}^{n} \phi_i(a_i^\top w )+ \lambda g(w) \right],
	\end{equation}
	which fits many of the statistical estimation models such as Ridge Regression. We present modified PSN for this type of problems that truck dual and primal variables. 
	
	\subsection{Contributions}
	The main contribution of this paper is the {\em design of a novel parallel algorithm} and its subsequent {\em novel theoretical analysis}. In the case of a smooth objective function, we present convergence analysis with proofs. The method in its simple serial case reduces to variants of algorithms introduced in \cite{SDNA} or \cite{Shalev-Shwartz2013}. 
	
	We identify parameters of the problem that determine its parallelizability and analyze them in special cases. To do this, we generalize two classes of quadratic optimization problems parametrized by one parameter and analytically calculate the convergence rates for them. 
	
	This work utilizes the research on sampling analyzed in paper \cite{ESO}, and is contrasted mainly with another parallel algorithm - parallel coordinate method (PCDM) analyzed in \cite{NSYNC}. Furthermore, it generalizes further the class of coordinate methods beyond the generalization of blocks. In this work, the sampled blocks of the over-approximation are not fixed and can overlap. The choice of sampling leading to non-overlapping and fixed blocks has been analyzed previously in \cite{Fountoulakis2015,Marecek2014} and mainly in \cite{Richtarik2012}.
	
	\subsection{Notation}
	
		\paragraph{Vectors.}
		In this work, we use the convention that vectors in $\mathbb{R}^n$ are labeled with lowercase Latin letters.	By $e_1, e_2, \dots e_n$ we denote the standard basis vectors in $\mathbb{R}^n$. The $i$th element of a vector $x \in \mathbb{R}^n$ therefore is $x_i= e_i^\top x$.
		The standard Euclidean inner product between vectors in $\mathbb{R}^n$ is given by
		$\braket{x,y} \eqdef x^\top y=\sum_{i=1}^{n}x_iy_i.$
	
		\paragraph{Matrices.} 
		We use the convention that matrices in $\mathbb{R}^{n\times n}$ are labeled with uppercase bold Latin letters.	By $\mathbf{I}$ we denote the identity matrix in $\mathbb{R}^{n\times n}$. The diagonal matrix with vector $w \in \mathbb{R}^n$  on the diagonal is denoted by $\mathbf{D}(w)$. We write $\mathbf{M} \succeq 0$ (resp.\ $\mathbf{M} \succ 0$) to indicate that $\mathbf{M}$ is symmetric positive semi-definite (resp.\ symmetric positive definite). Elements of a matrix $\mathbf{A} \in \mathbb{R}^{n\times n}$ are denoted in the natural way: $\mathbf{A}_{ij}\eqdef  e_i^\top \mathbf{A}e_j$.

		\paragraph{Sampling a Matrix.} 
		Let $S$ be a non-empty subset of  $[n]:=\{1,2, \dots, n\}$.
		We let $\bI_{:S}$ be the $n \times |S|$ matrix  composed of columns $i \in S$ of the $n\times n$ identity matrix $\bI$. Note that $\bI_{:S}^\top \bI_{:S}$ is the $|S|\times |S|$ identity matrix.

		 Given an invertible matrix $\bM\in \R^{n\times n}$, we can extract its principal  $|S| \times |S|$ sub-matrix  corresponding to the rows and columns indexed by $S$ by
			\begin{equation} \label{eq:smaller_splice}
				\mathbf{M}_{SS} \eqdef \bI_{:S}^\top \mathbf{M} \bI_{:S}.
			\end{equation}	
		It will be also convenient to  define

		\begin{equation}\label{eq:slice}
		\bM_S \eqdef \bI_{:S}\bM_{SS} \bI_{:S}^\top
		\end{equation}
		 and 
		 \begin{equation} \label{eq:inverse}
		 (\bM_{S})^{-1} \eqdef \bI_{:S}(\bM_{SS})^{-1} \bI_{:S}^\top
		 \end{equation} as we shall use these matrices often. Notice that $\mathbf{M}_{S}$ is the $n\times n$ matrix obtained from $\bM$ by retaining elements $\bM_{ij}$ for $i \in S$ and $j \in S$; and all the other elements  set to zero. On the other hand, $(\bM_{S})^{-1}$ is obtained from $\bM$ by zeroing out the elements corresponding to $i,j\notin S$ and inverting, ''in place'', the $|S|\times |S|$ matrix composed of elements $i,j\in S$. 
		
			Additionally, for any vector $h \in \mathbb{R}^n$ and $\emptyset \neq S\subseteq [n]$ we define $h_S\in \R^n$ by
			\begin{equation}
				h_S \eqdef \mathbf{I}_{:S}h=\sum_{i \in S}h_ie_i.
			\end{equation}
That is, $h_S$ is obtained from 	$h$ by zeroing out elements $i\notin S$.
		
	\subsection{Randomly sampled sub-matrices}
		In this section we inroduce some basic  notation which will be needed throughot the paper, following the convention established  in \cite{ESO}. 
	
	A {\em sampling}, denoted $\hat{S}$, is a random set-valued mapping with values being subsets of $[n]:= 
		\{1, \dots, n\}$.  With each sampling we associate a {\em probability matrix}, $\mathbf{P} = \mathbf{P}(\hat{S})$, defined via
		\begin{equation}
		\mathbf{P}_{ij}\eqdef\mathbb{P}(i \in \hat{S} \text{ and } j \in \hat{S}), \qquad i,j\in [n].
		\end{equation} 
		We drop the index $\hat{S}$ if  it is clear from the context what sampling is being considered. Further, let 
		\begin{equation}
		p_i\eqdef \mathbf{P}_{ii} = \mathbb{P}(i \in \hat{S}), \qquad i\in [n].
		\end{equation}

A sampling $\hat{S}$ for which $p_i>0$ for all $i\in [n]$ is called \emph{proper}. 	It is easy to see that the probability matrix $\mathbf{P}(\hat{S})$ does not uniquely determine the underlying sampling $\hat{S}$. For any matrix $\mathbf{M}\in \R^{n\times n}$, by $\mathbf{M}_{\hat{S}}$ we denote the random variable that selects out the elements of $\mathbf{M}$ according to  $\hat{S}$, as defined in \eqref{eq:slice}.

\section{Main Assumptions and Random Matrix Sampling}
We start this section with three assumptions that concern our objective function.

\subsection{Assumptions}
In the following lines we present three main assumptions on the problem structure and machinery at hand used to solve it.
\begin{assumption}[Smoothness]\label{ass:smooth}
	There exists a symmetric positive definite matrix $\mathbf{M} \in \mathbb{R}^{n \times n}$ such that $\forall x,h \in \mathbb{R}^n$,
	\begin{equation}\label{eq:smooth}
	f(x+h)\leq f(x)+\braket{\nabla f(x),h}+\frac{1}{2}\braket{h,\mathbf{M}h}.
	\end{equation}
\end{assumption}

\begin{assumption}[Strong Convexity]\label{ass:strongconvex}
	There exists a symmetric positive definite matrix $\mathbf{G} \in \mathbb{R}^{n \times n}$ such that $\forall x,h \in \mathbb{R}^n$,
	\begin{equation}\label{eq:strgcnvx}
	f(x)+\braket{\nabla f(x),h}+\frac{1}{2}\braket{h,\mathbf{G}h}\leq f(x+h)
	\end{equation}
\end{assumption}
	Minimizing \eqref{eq:strgcnvx} on both sides in $h$ gives
	\begin{equation}\label{eq:strgcnvx2}
	f(x)-f(x^*)\leq\frac{1}{2}\braket{\nabla f(x),\mathbf{G}^{-1}\nabla f(x)},
	\end{equation}
	where $x^*$ denotes the (necessarily unique) minimizer of $f$. Also note that clearly
	\begin{equation}\label{eq:ordering}
	\mathbf{G} \preceq \mathbf{M},
	\end{equation}
	with equality if and only if $f$ is a quadratic.

\subsection{Samplings}
We begin by defining samplings used in this work and by exposing the differences among them. Previous papers such as \cite{ESO, QUARTZ, NSYNC} focused on arbitrary samplings. In this work we will focus on subset of possible \emph{proper} samplings, which can be easily implemented in practice. However, for the sake of completeness, we will define other samplings as well. 

\begin{definition}\hfil
\begin{enumerate}
\item 
	$\tau$-nice sampling picks subsets of $[n]$ with cardinality $\tau$, uniformly at random. 

\item 
	$\tau$-list sampling picks subsets of $[n]$ with cardinality $\tau$, uniformly at random with constraint that subsets have to contain successive elements modulo $n$. 
\item 
	Parallel ($\tau$,c)-nice sampling performs $c$ independent $\tau$-nice samplings with replacement. The independent sets are denotes $\hat{S}_1,\hat{S}_2,\dots,\hat{S}_c$.

\item \label{def:tau-list}
	Parallel ($\tau$,c)-list sampling performs $c$ independent $\tau$-list samplings with replacement. The independent sets are denotes $\hat{S}_1,\hat{S}_2,\dots,\hat{S}_c$.
\item 
	Parallel ($\tau,c$)-non-overlapping sampling performs one $\tau c$-nice sampling on a master node. Subsequently, the set of size $c\tau$ is partitioned to c sets and distributed to worker nodes.
	\end{enumerate}
\end{definition}

\begin{remark} The difference between parallel $(c,\tau)$-nice sampling and standard $c\tau$-nice sampling can be 
	visualized by looking at what part of matrix influences a single iteration. For example, suppose that $n = 5$, $c = 2$ and $\tau = 2$. 
	
	Both samplings sample $c\tau$ coordinates; in this case $4$. Let the coordinates sampled with $c\tau$-nice sampling be $\{1,2,3,4\}$. Further, the parallel sampling samples two sets at random (let this be $\{1,2\}$ and $\{3,4\}$ for this discussion) and distributes it to the worker nodes, whereas with serial $\tau$-nice all four coordinates are handled by the single master node. 
	
	\begin{figure}
		\centering
		\includegraphics[scale=0.4]{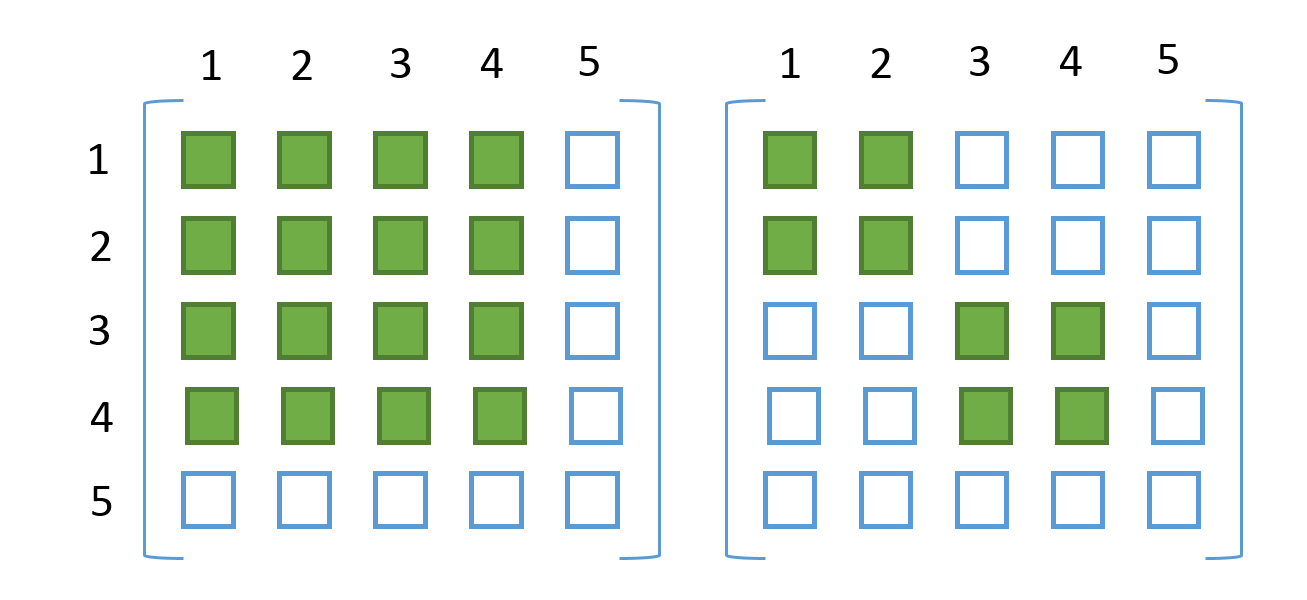}
		\caption{On the left side we see a sampling of a matrix of size $5 \times 5$ with $\tau$-nice sampling ($\tau = 4$). Similarly, on the right we see sampling with $(c,\tau)$-nice parallel sampling with $\tau=2$ and $c=2$. Namely, here the two sets sampled are $\{\{1,2\},\{3,4\}\}$. We can see that the serial sampling accesses more information in a single iteration even when the number of coordinates updated at each iteration is the same.}
		\label{fig:matrix_sampling}
	\end{figure}
	
	In Figure \ref{fig:matrix_sampling} one can see that with $\tau$-nice more information is used, however a bigger matrix has to be inverted leading to greater computational costs. 
	
\end{remark}

\begin{assumption}[Independence of Serial Samplings]
	The random sets $\{S_k\}_{k\geq0}$ are:
	\begin{enumerate}
		\item independent and identically distributed, 
		\item proper, and 
		\item non-vacuous (i.e., $\mathbb{P}(S_k=\emptyset)=0$).
	\end{enumerate}
\end{assumption}

\begin{assumption}[Independence of Parallel Samplings]\label{ass:parallel_sample}
	For the random sets of sets \\
	$\{\{S^k_1,S^k_2,\dots,S^k_c\}\}_{k\geq0}$ holds that:
	\begin{enumerate}
		\item the sets are independent and identically distributed
		\item The sets $S^k_i$ $\forall i \in [n]$ are identically and independently distributed
		\item Each of the $S^k_i$ is proper.
		\item Each of the $S^k_i$ is non-vacuous; i.e. $\mathbb{P}(S^k_i=\emptyset)=0$.
	\end{enumerate}
\end{assumption}

	\section{The Algorithm}

	\subsection{Serial algorithm for smooth functions}
	The serial method that shall be extended to parallel settings in this work was first formulated in \cite{SDNA}. The method was introduced to solve problem in $\min_{x \in \mathbb{R}^n} f(x)$, where $f$ is smooth and strongly convex. It is an iterative method where new best estimate on optimal solution in relation to the previous is given by
	\begin{equation}\label{eq:iter}
		x^{k+1}=x^{k}+h^k.
	\end{equation}
	In the original language the update step is defined via,
	\begin{equation}
		h^k=-(\mathbf{M}_{S_k})^{-1}\nabla f(x^k). 
	\end{equation}
	It has been shown that this method converges to optimum given that smoothness and strong convexity assumptions hold. In particular, the following theorem is valid. 
	
	\begin{theorem}[\cite{ESO}]\label{theorem:Serial_Method_1}
		Let Assumptions 1,2 and 4 be satisfied. Let $\{x^k\}_{k\geq0}$ be a sequence of random vectors produced by the Serial Method and let $x^*$ be optimum of function $f$. Then,
		\begin{equation}\label{eq:complexity}
			\mathbb{E}\left[f(x^{k+1})-f(x^*)\right]\leq (1-\sigma_1)\mathbb{E}\left[f(x^k)-f(x^*)\right],
		\end{equation}
		where
		\begin{equation}\label{eq:sigma_1}
			\sigma_1\eqdef\lambda_{\min}\left(\mathbf{G}^{1/2}\mathbb{E}\left[\left(\mathbf{M}_{\hat{S}}\right)^{-1}\right]\mathbf{G}^{1/2}\right).
		\end{equation}
	\end{theorem}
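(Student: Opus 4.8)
The plan is to combine the quadratic over-approximation from Assumption~\ref{ass:smooth} with the strong-convexity consequence~\eqref{eq:strgcnvx2}, exploiting the fact that the step $h^k = -(\bM_{S_k})^{-1}\nabla f(x^k)$ is exactly the minimizer of that over-approximation restricted to the coordinates in $S_k$. First I would fix $k$, condition on $x^k$, write $g \eqdef \nabla f(x^k)$ and $S \eqdef S_k$, and substitute $h = h^k = -(\bM_S)^{-1}g$ into~\eqref{eq:smooth}, so that
\begin{equation}\label{eq:plan-desc}
f(x^{k+1}) \le f(x^k) + \braket{g,h^k} + \tfrac12\braket{h^k,\bM h^k}.
\end{equation}
The two inner products on the right must then be simplified in terms of $g$.

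The key algebraic fact is the ``slicing'' identity $(\bM_S)^{-1}\bM(\bM_S)^{-1} = (\bM_S)^{-1}$. It follows by expanding the definitions~\eqref{eq:slice}--\eqref{eq:inverse} in terms of $\bI_{:S}$ and using $\bI_{:S}^\top\bM\bI_{:S} = \bM_{SS}$, whereupon the central factors $(\bM_{SS})^{-1}\bM_{SS}(\bM_{SS})^{-1}$ telescope to $(\bM_{SS})^{-1}$. Hence $\braket{h^k,\bM h^k} = \braket{g,(\bM_S)^{-1}g}$ while $\braket{g,h^k} = -\braket{g,(\bM_S)^{-1}g}$, so the linear and quadratic terms in~\eqref{eq:plan-desc} combine and~\eqref{eq:plan-desc} collapses to $f(x^{k+1}) \le f(x^k) - \tfrac12\braket{g,(\bM_S)^{-1}g}$. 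Taking expectation over $\hat S$ conditionally on $x^k$ --- legitimate because properness and non-vacuity keep $\hat S$ nonempty and $\bM\succ0$ makes every principal submatrix invertible --- yields $\mathbb{E}[f(x^{k+1})\mid x^k] \le f(x^k) - \tfrac12\braket{g,\mathbb{E}[(\bM_{\hat S})^{-1}]g}$.

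It then remains to turn the quadratic term into a multiple of $f(x^k)-f(x^*)$. Subtracting $f(x^*)$, I would change variables $g \mapsto \bG^{-1/2}g$ and invoke the Rayleigh-quotient characterization of $\lambda_{\min}$ to get $\braket{g,\mathbb{E}[(\bM_{\hat S})^{-1}]g} \ge \sigma_1\braket{g,\bG^{-1}g}$, and then~\eqref{eq:strgcnvx2} gives $\tfrac12\braket{g,\bG^{-1}g} \ge f(x^k)-f(x^*)$. Chaining these, $\mathbb{E}[f(x^{k+1})-f(x^*)\mid x^k] \le (1-\sigma_1)\bigl(f(x^k)-f(x^*)\bigr)$; taking total expectation via the tower property gives~\eqref{eq:complexity}.

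I do not expect a genuine obstacle here: the only delicate point is the slicing identity and, with it, the verification that $\braket{h^k,\bM h^k}$ really reduces to $\braket{g,(\bM_S)^{-1}g}$ with no leftover cross terms --- this cancellation is precisely what produces the clean factor $-\tfrac12$ and hence the contraction. One might also note in passing that $\sigma_1\in(0,1]$ (positivity from properness, and $\sigma_1\le1$ from $\bG\preceq\bM$ in~\eqref{eq:ordering}), which is what makes the rate nontrivial, though this is not needed to establish the inequality itself.
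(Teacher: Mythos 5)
Your argument is correct and follows essentially the same route as the paper: the paper cites this result from \cite{ESO}, but its own appendix proof of Theorem~\ref{theorem:Parallel_Method_1} specialized to $c=1$, $b=1$ is exactly your chain --- the smoothness bound with $h^k=-(\bM_{\hat S})^{-1}\nabla f(x^k)$, the slicing identity $(\bM_S)^{-1}\bM(\bM_S)^{-1}=(\bM_S)^{-1}$ collapsing the linear and quadratic terms to $-\tfrac12\braket{g,(\bM_S)^{-1}g}$, conditional expectation, the Rayleigh-quotient bound via $\sigma_1$, and~\eqref{eq:strgcnvx2}. No gaps.
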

	
	The complexity of the algorithm depends on a parameter denoted $\sigma_1$, which is a complicated scalar parameter loosely related to the condition number of a matrix $\bM$. More intuitive analysis of this parameter is presented in \cite{SDNA}, and for special classes of problems, further in this work.

	\subsection{Parallel formulation for smooth functions}
	In this section we introduce a parallel extension of the serial method. We follow the same general iterative scheme defined by \eqref{eq:iter}, and thus our method differs only by definition of the update rule which we define as
	\begin{equation}
		h^k=-\frac{1}{b}\sum_{i=1}^{c}\left(\mathbf{M}_{S^k_i}\right)^{-1}\nabla f(x^k).\label{eq:update}
	\end{equation}

The index $k$ is the iteration counter and the index $i$ labels the subsets of $[n]$, $S^k_i$, at each iteration. One can see that our method depends on the parameter $b$, which we shall comment on  in the next sections on convergence analysis. The parameter $b$ cannot admit any values however. We show in the next section that, as long as $b$ is bigger than some threshold value $b^*$, the parallel method is, in a certain precise sense, superior to the serial method.  
	
	\begin{algorithm}
		\caption{PSN: Parallel Stochastic Newton Method}
		\label{alg:Method1}
		\renewcommand{\algorithmicrequire}{\textbf{Parameters:}}
		\renewcommand{\algorithmicensure}{\textbf{Initialization:}}
		\begin{algorithmic}[1]
			\Require sampling $\hat{S}$; data matrix $\bM$; aggregation parameter $b$
			\Ensure Pick $x^0 \in \mathbb{R}^n$
			\For{ $k = 0,1,2, \dots $ }
			
			\For{ $i = 1, \dots, c $ in parallel}
			\State Independently generate a random set  $\hat{S}^k_i \sim \hat{S}$ 
			\State $h^{k}_i \leftarrow \left(\mathbf{M}_{\hat{S}^k_i}\right)^{-1}\nabla f(x^k)$
			\EndFor
			\State $x^{k+1} \leftarrow x^k - \frac{1}{b}\sum_{i=1}^{c} h^{k}_i$ 
			\EndFor
			
		\end{algorithmic}
	\end{algorithm}

\subsection{Main convergence result}

	We are now ready to present the main theoretical result of this paper.
	
\begin{theorem}\label{theorem:Parallel_Method_1}
			Let Assumptions \ref{ass:smooth}, \ref{ass:strongconvex} and \ref{ass:parallel_sample} be satisfied. 
 Assume that the aggregation parameter $b$ satisfies
			\begin{equation}\label{eq:b}
			b \geq (c-1)\lambda \theta+1,
			\end{equation}
where
			\begin{equation} \label{eq:lambda}
			\lambda\eqdef\lambda_{\max}\left(\mathbf{G}^{-1/2}\mathbf{M}\mathbf{G}^{-1/2}\right)
			\end{equation}
			and
\begin{equation}\label{eq:a_1}
			\theta \eqdef\lambda_{\max}\left(\mathbf{G}^{1/2}\mathbb{E}\left[\left(\mathbf{M}_{\hat{S}}\right)^{-1}\right]\mathbf{G}^{1/2}\right).
\end{equation}
Then the random iterates $\{x^k\}$ produced by the PSN method (Algorithm~\ref{alg:Method1}) satisfy
			\begin{equation}
			\mathbb{E} \left[f(x^{k+1})-f(x^*) \right]\leq(1-\sigma_p)\mathbb{E} \left[f(x^k)-f(x^*)\right]
			\end{equation}
			where
			\begin{equation}\label{eq:sigma_p}
			\sigma_p\eqdef\frac{c\sigma_1}{b} = \frac{c\sigma_1}{1+ \theta \lambda (c-1)}.
			\end{equation}

		\end{theorem}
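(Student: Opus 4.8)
The plan is to mimic the one-step descent argument behind Theorem~\ref{theorem:Serial_Method_1}, but now keeping track of the extra randomness coming from the $c$ independent samplings used at each iteration. Writing $g\eqdef\nabla f(x^k)$, $\mathbf{Z}_i\eqdef(\mathbf{M}_{\hat S^k_i})^{-1}$ and $\mathbf{Z}^\star\eqdef\mathbb{E}[(\mathbf{M}_{\hat S})^{-1}]$, the PSN step \eqref{eq:update} reads $h^k=-\tfrac1b\sum_{i=1}^c\mathbf{Z}_i g$, and by Assumption~\ref{ass:parallel_sample} the $\mathbf{Z}_i$ are, conditionally on $x^k$, i.i.d.\ with mean $\mathbf{Z}^\star$. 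First I would apply the smoothness inequality \eqref{eq:smooth} at $x=x^k$, $h=h^k$, and take the conditional expectation $\mathbb{E}_k$ over the samplings drawn at iteration $k$; this reduces the task to controlling the two scalars $\mathbb{E}_k\braket{g,h^k}$ and $\mathbb{E}_k\braket{h^k,\mathbf{M}h^k}$.

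The linear term is immediate: by linearity, $\mathbb{E}_k\braket{g,h^k}=-\tfrac{c}{b}\braket{g,\mathbf{Z}^\star g}$. The quadratic term is where the work is. Expanding the square gives $\braket{h^k,\mathbf{M}h^k}=\tfrac1{b^2}\sum_{i,j}\braket{\mathbf{Z}_ig,\mathbf{M}\mathbf{Z}_jg}$, and I would split the $c^2$ ordered pairs into the $c$ diagonal and the $c(c-1)$ off-diagonal ones. For a diagonal term I would use the projection-type identity $(\mathbf{M}_S)^{-1}\mathbf{M}(\mathbf{M}_S)^{-1}=(\mathbf{M}_S)^{-1}$, which is immediate from \eqref{eq:smaller_splice}--\eqref{eq:inverse} together with $\bI_{:S}^\top\mathbf{M}\bI_{:S}=\mathbf{M}_{SS}$; it collapses $\braket{\mathbf{Z}_ig,\mathbf{M}\mathbf{Z}_ig}$ to $\braket{g,\mathbf{Z}_ig}$, so the diagonal contributes $\tfrac{c}{b^2}\braket{g,\mathbf{Z}^\star g}$. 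For an off-diagonal term, independence of $\mathbf{Z}_i$ and $\mathbf{Z}_j$ gives $\mathbb{E}_k\braket{\mathbf{Z}_ig,\mathbf{M}\mathbf{Z}_jg}=\braket{\mathbf{Z}^\star g,\mathbf{M}\mathbf{Z}^\star g}$, which I would then bound by $\lambda\theta\braket{g,\mathbf{Z}^\star g}$: first $\mathbf{M}\preceq\lambda\mathbf{G}$ (the content of \eqref{eq:lambda}) gives $\braket{\mathbf{Z}^\star g,\mathbf{M}\mathbf{Z}^\star g}\le\lambda\braket{\mathbf{Z}^\star g,\mathbf{G}\mathbf{Z}^\star g}$, and then, setting $u\eqdef\mathbf{G}^{-1/2}g$ and $\mathbf{W}\eqdef\mathbf{G}^{1/2}\mathbf{Z}^\star\mathbf{G}^{1/2}$, the bound $0\preceq\mathbf{W}\preceq\theta\bI$ from \eqref{eq:a_1} forces $\mathbf{W}^2\preceq\theta\mathbf{W}$, hence $\braket{\mathbf{Z}^\star g,\mathbf{G}\mathbf{Z}^\star g}=\braket{u,\mathbf{W}^2u}\le\theta\braket{u,\mathbf{W}u}=\theta\braket{g,\mathbf{Z}^\star g}$.

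Collecting terms yields $\mathbb{E}_k[f(x^{k+1})]\le f(x^k)-\bigl(\tfrac{c}{b}-\tfrac{c(1+(c-1)\lambda\theta)}{2b^2}\bigr)\braket{g,\mathbf{Z}^\star g}$, and this is where the choice of $b$ enters: the hypothesis $b\ge 1+(c-1)\lambda\theta$ makes $\tfrac{c(1+(c-1)\lambda\theta)}{2b^2}\le\tfrac{c}{2b}$, so the coefficient of $\braket{g,\mathbf{Z}^\star g}$ is at least $\tfrac{c}{2b}$. To finish I would invoke strong convexity: $\sigma_1=\lambda_{\min}(\mathbf{G}^{1/2}\mathbf{Z}^\star\mathbf{G}^{1/2})$ gives $\mathbf{Z}^\star\succeq\sigma_1\mathbf{G}^{-1}$, so $\braket{g,\mathbf{Z}^\star g}\ge\sigma_1\braket{g,\mathbf{G}^{-1}g}\ge 2\sigma_1(f(x^k)-f(x^*))$ by \eqref{eq:strgcnvx2}. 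This gives $\mathbb{E}_k[f(x^{k+1})-f(x^*)]\le(1-\tfrac{c\sigma_1}{b})(f(x^k)-f(x^*))$; taking total expectations and unrolling (valid since the sampling sets are i.i.d.\ across $k$) produces the claimed rate with $\sigma_p=c\sigma_1/b$.

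I expect the main obstacle to be the correct handling of $\mathbb{E}_k\braket{h^k,\mathbf{M}h^k}$: one must see that the $c$ diagonal ``variance'' terms stay linear in $\mathbf{Z}^\star$ precisely because of $(\mathbf{M}_S)^{-1}\mathbf{M}(\mathbf{M}_S)^{-1}=(\mathbf{M}_S)^{-1}$, whereas the $c(c-1)$ cross terms are genuinely quadratic and must be reabsorbed into the first-order progress term --- this is exactly where $\mathbf{M}\preceq\lambda\mathbf{G}$ and $\mathbf{W}^2\preceq\theta\mathbf{W}$ are needed, and where the threshold $b^\star=1+(c-1)\lambda\theta$ comes from. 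Once these bounds are in place, the remaining combination with strong convexity and the telescoping over iterations is routine.
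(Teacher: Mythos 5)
Your proposal is correct and follows essentially the same route as the paper's proof in the appendix: smoothness, splitting the quadratic term into the $c$ diagonal terms (collapsed via the identity $(\mathbf{M}_S)^{-1}\mathbf{M}(\mathbf{M}_S)^{-1}=(\mathbf{M}_S)^{-1}$) and the $c(c-1)$ cross terms (handled by independence, then $\mathbf{M}\preceq\lambda\mathbf{G}$ and the $\theta$ bound), the threshold $b\geq 1+(c-1)\lambda\theta$, and finally $\sigma_1$ with \eqref{eq:strgcnvx2}. The only difference is cosmetic: the paper conjugates by $\mathbb{E}[(\mathbf{M}_{\hat S})^{-1}]^{1/2}$ (its $z^k$, $\mathbf{X}$, $\mathbf{Y}$) where you conjugate by $\mathbf{G}^{\pm 1/2}$ (your $u$, $\mathbf{W}$), which yields the identical bounds.
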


	\section{Complexity Analysis}
	In order to develop a sound complexity analysis and prove Theorem \ref{theorem:Parallel_Method_1}, we need to develop a series of lemmas dealing with expectations of matrix minors and relations of constants defined in $\eqref{eq:a_1}$ and $\eqref{eq:sigma_p}$.

	\subsection{Sampling lemmas}
	
	
	\begin{lemma}[Tower property for expectation of matrix minors]
		Let $\bA \in \mathbb{R}^{n \times n}$ be symmetric, and $x \in \mathbb{R}^n$. If $\hat{S}_1, \hat{S}_2\sim \hat{S}$ are i.i.d. samplings, then,
		\begin{equation}\label{eq:tower_prop}
			\mathbb{E}\left[\braket{\bA \bA_{\hat{S}_1}x,\bA_{\hat{S}_2}x}\right]=\braket{\bA\mathbb{E}[\bA_{\hat{S}}]x,\mathbb{E}[\bA_{\hat{S}}]x}.
		\end{equation}
	\end{lemma}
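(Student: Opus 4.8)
The approach is to turn the inner product into a quadratic form in $x$ built from the matrix product $\bA_{\hat{S}_2}\,\bA\,\bA_{\hat{S}_1}$, observe that the sampled slices inherit symmetry from $\bA$, and then push the expectation inside and factorize it using independence of $\hat{S}_1$ and $\hat{S}_2$. The identity \eqref{eq:tower_prop} is then read off after re-expanding the resulting product back into an inner product.

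In detail, I would proceed as follows. First I would note that for every $\emptyset\neq S\subseteq[n]$ the matrix $\bA_S=\bI_{:S}\bA_{SS}\bI_{:S}^\top$ from \eqref{eq:slice} is symmetric, since $\bA_{SS}=\bI_{:S}^\top\bA\bI_{:S}$ is symmetric whenever $\bA$ is; hence $\bA_{\hat{S}_1}$ and $\bA_{\hat{S}_2}$ are (random) symmetric matrices and, by linearity, so is $\mathbb{E}[\bA_{\hat{S}}]$. Next I would rewrite
\begin{equation*}
\braket{\bA\bA_{\hat{S}_1}x,\bA_{\hat{S}_2}x}=\left(\bA_{\hat{S}_2}x\right)^\top\bA\bA_{\hat{S}_1}x=x^\top\bA_{\hat{S}_2}\,\bA\,\bA_{\hat{S}_1}\,x,
\end{equation*}
using $\bA_{\hat{S}_2}^\top=\bA_{\hat{S}_2}$. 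Taking expectations, since $x$ and $\bA$ are deterministic and $\hat{S}_1,\hat{S}_2$ are independent, the expectation factorizes entrywise,
\begin{equation*}
\mathbb{E}\left[\bA_{\hat{S}_2}\,\bA\,\bA_{\hat{S}_1}\right]=\mathbb{E}\left[\bA_{\hat{S}_2}\right]\bA\,\mathbb{E}\left[\bA_{\hat{S}_1}\right]=\mathbb{E}[\bA_{\hat{S}}]\,\bA\,\mathbb{E}[\bA_{\hat{S}}],
\end{equation*}
where the last step uses that the two samplings are identically distributed. Finally, using symmetry of $\mathbb{E}[\bA_{\hat{S}}]$ I would fold the product back,
\begin{equation*}
x^\top\mathbb{E}[\bA_{\hat{S}}]\,\bA\,\mathbb{E}[\bA_{\hat{S}}]\,x=\left(\mathbb{E}[\bA_{\hat{S}}]x\right)^\top\bA\left(\mathbb{E}[\bA_{\hat{S}}]x\right)=\braket{\bA\mathbb{E}[\bA_{\hat{S}}]x,\mathbb{E}[\bA_{\hat{S}}]x},
\end{equation*}
which is \eqref{eq:tower_prop}.

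There is no genuinely hard step here; the two points that deserve a sentence of care are (i) that the random slices $\bA_{\hat{S}}$ are symmetric, which is what licenses passing between the inner product $\braket{\bA\bA_{\hat{S}_1}x,\bA_{\hat{S}_2}x}$ and the quadratic form $x^\top\bA_{\hat{S}_2}\bA\bA_{\hat{S}_1}x$, and (ii) the factorization $\mathbb{E}[\bA_{\hat{S}_2}\bA\bA_{\hat{S}_1}]=\mathbb{E}[\bA_{\hat{S}_2}]\,\bA\,\mathbb{E}[\bA_{\hat{S}_1}]$, which is exactly the "tower"/independence property named in the statement and holds because the constant matrix $\bA$ simply sits between two independent random factors (one can verify it entrywise via $\mathbb{E}[X_{ik}Y_{kj}]=\mathbb{E}[X_{ik}]\mathbb{E}[Y_{kj}]$ for independent $X=\bA_{\hat{S}_2}\bA$ depending only on $\hat{S}_2$ and $Y=\bA_{\hat{S}_1}$ depending only on $\hat{S}_1$).
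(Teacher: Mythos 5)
Your proposal is correct and follows essentially the same route as the paper: rewrite the inner product as the quadratic form $x^\top \bA_{\hat{S}_2}\bA\bA_{\hat{S}_1}x$, factor the expectation across the two independent random slices (the paper phrases this via conditioning on one sampling and the tower property, which is the same independence fact you verify entrywise), and use identical distribution to replace both factors by $\mathbb{E}[\bA_{\hat{S}}]$. Your explicit remark that the slices $\bA_{\hat{S}}$ inherit symmetry from $\bA$ is a small point the paper leaves implicit, but otherwise the arguments coincide.
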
 
	
	\begin{proof}
	We write
		\begin{eqnarray*}
			\mathbb{E}[\braket{\bA \bA_{\hat{S}_1}x,\bA_{\hat{S}_2}x}] 
		&=& \mathbb{E}[x^\top \bA_{\hat{S}_1}\bA \bA_{\hat{S}_2}x] \\
		&=& x^\top\mathbb{E}[\bA_{\hat{S}_1}\bA \bA_{\hat{S}_2}]x \\
			&
			= &  x^\top \mathbb{E}\left[\mathbb{E}\left[\bA_{\hat{S}_1}\bA\bA_{\hat{S}_2} \;|\; \hat{S}_2 \right]\right]x \\
			&=& x^\top \mathbb{E}\left[ \mathbb{E}[\bA_{\hat{S}_1}] \bA\bA_{\hat{S}_2}  \right]x  \\
			&= &x^\top \mathbb{E}[\bA_{\hat{S}}]\bA\mathbb{E}[\bA_{\hat{S}}]x \\
			&=& \braket{\bA\mathbb{E}[\bA_{\hat{S}}]x,\mathbb{E}[\bA_{\hat{S}}]x}.
		\end{eqnarray*}
In the second step we use linearity of expectation, combined with linearity of the mapping $\bX \to x^\top \bX x$. In the third step we use the tower property. 		In the fourth step, we use linearity of expectation and independence. In the fifth step we use linearity of expectation again, combined with the assumption that $\hat{S}_1$ and $\hat{S}_2$ have the same distribution as $\hat{S}$.
	\end{proof}
	
	\begin{lemma}\label{lemma:shur_pos}
		Assume that the block matrix
		\begin{equation}
			\bQ=\begin{pmatrix}
				\mathbf{A} & \mathbf{B} \\
				\mathbf{B}^\top & \mathbf{C} 
			\end{pmatrix}
		\end{equation} is positive definite. Then the matrix defined as 
		\begin{equation}\
			\mathbf{K}=\mathbf{A}^{-1}\mathbf{B}(\tilde{\mathbf{A}})^{-1}\mathbf{B}^\top\mathbf{A}^{-1},
		\end{equation}  where $\tilde{\mathbf{A}}$ denotes the Shur complement of  $\mathbf{A}$,
		is positive semi-definite.
	\end{lemma}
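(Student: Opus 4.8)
The plan is to express $\mathbf{K}$ as a product of the form $\mathbf{Z}^\top \mathbf{Z}$ (or more generally $\mathbf{Z}^\top \mathbf{W} \mathbf{Z}$ with $\mathbf{W}\succ 0$), which automatically yields positive semi-definiteness. The key structural fact I would use is that positive definiteness of the block matrix $\mathbf{Q}$ implies two things: first, that the pivot block $\mathbf{A}$ is itself positive definite (hence invertible, and $\mathbf{A}^{-1}\succ 0$); and second, that the Schur complement $\tilde{\mathbf{A}} = \mathbf{C} - \mathbf{B}^\top \mathbf{A}^{-1}\mathbf{B}$ is positive definite as well. Both are standard consequences of the Schur complement characterization of positive definiteness for symmetric block matrices, and I would invoke them directly.

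Once $\tilde{\mathbf{A}}\succ 0$, it has a symmetric positive definite square root $\tilde{\mathbf{A}}^{-1/2}$, and I would write
\begin{equation}
\mathbf{K} = \mathbf{A}^{-1}\mathbf{B}\,\tilde{\mathbf{A}}^{-1}\,\mathbf{B}^\top \mathbf{A}^{-1} = \left(\tilde{\mathbf{A}}^{-1/2}\mathbf{B}^\top \mathbf{A}^{-1}\right)^\top \left(\tilde{\mathbf{A}}^{-1/2}\mathbf{B}^\top \mathbf{A}^{-1}\right) = \mathbf{Z}^\top \mathbf{Z},
\end{equation}
where $\mathbf{Z} \eqdef \tilde{\mathbf{A}}^{-1/2}\mathbf{B}^\top \mathbf{A}^{-1}$, using that $\mathbf{A}^{-1}$ is symmetric. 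Then for any $v \in \mathbb{R}^n$ we have $\langle v, \mathbf{K} v\rangle = \langle \mathbf{Z} v, \mathbf{Z} v\rangle = \norm{\mathbf{Z} v}^2 \geq 0$, which is exactly the claim. Alternatively, and perhaps cleaner, I would note that $\tilde{\mathbf{A}}^{-1}\succeq 0$, so $\mathbf{B}\,\tilde{\mathbf{A}}^{-1}\,\mathbf{B}^\top \succeq 0$ by congruence, and then conjugating again by the symmetric matrix $\mathbf{A}^{-1}$ preserves positive semi-definiteness — i.e. $\mathbf{K} = \mathbf{A}^{-1}\bigl(\mathbf{B}\tilde{\mathbf{A}}^{-1}\mathbf{B}^\top\bigr)\mathbf{A}^{-1} \succeq 0$ since $\mathbf{P}^\top \mathbf{N} \mathbf{P}\succeq 0$ whenever $\mathbf{N}\succeq 0$.

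I do not anticipate a genuine obstacle here; the only point requiring a little care is establishing that the Schur complement $\tilde{\mathbf{A}}$ is positive definite (so that $\tilde{\mathbf{A}}^{-1}$ exists and is PSD) — this is where the hypothesis that the \emph{full} block matrix $\mathbf{Q}$ is positive definite, rather than merely $\mathbf{A}$, gets used, via the factorization $\mathbf{Q} = \begin{psmallmatrix}\mathbf{I} & 0\\ \mathbf{B}^\top\mathbf{A}^{-1} & \mathbf{I}\end{psmallmatrix}\begin{psmallmatrix}\mathbf{A} & 0\\ 0 & \tilde{\mathbf{A}}\end{psmallmatrix}\begin{psmallmatrix}\mathbf{I} & \mathbf{A}^{-1}\mathbf{B}\\ 0 & \mathbf{I}\end{psmallmatrix}$. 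I would state this factorization, read off that $\mathbf{A}\succ 0$ and $\tilde{\mathbf{A}}\succ 0$, and then the rest is the one-line congruence argument above.
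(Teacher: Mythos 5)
Your proof is correct, and at bottom it rests on the same two facts as the paper's: positive definiteness of $\mathbf{Q}$ forces $\mathbf{A}$ and the Schur complement $\tilde{\mathbf{A}}$ to be positive definite, after which $\mathbf{K}\succeq 0$ follows by a congruence/sandwiching argument. The difference is one of completeness rather than of route. The paper's proof is a two-line citation: it invokes Zhang's ``Notion 7.4'' for positivity of the Schur complement (with a somewhat muddled non-singularity remark about $\mathbf{C}$) and then simply asserts that $\mathbf{K}$ ``must be positive semi-definite as well,'' leaving the step from $\tilde{\mathbf{A}}\succ 0$ to $\mathbf{K}\succeq 0$ entirely implicit. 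You instead derive both positivity claims yourself from the block factorization of $\mathbf{Q}$ into the product of a unit lower-triangular factor, the block-diagonal matrix with blocks $\mathbf{A}$ and $\tilde{\mathbf{A}}$, and the transposed factor, and then make the final step explicit via $\mathbf{K}=\mathbf{Z}^\top\mathbf{Z}$ with $\mathbf{Z}=\tilde{\mathbf{A}}^{-1/2}\mathbf{B}^\top\mathbf{A}^{-1}$ (or equivalently the congruence $\mathbf{A}^{-1}(\mathbf{B}\tilde{\mathbf{A}}^{-1}\mathbf{B}^\top)\mathbf{A}^{-1}\succeq 0$). So your write-up is self-contained and supplies precisely the justification the paper's ``thus'' omits; what the paper's version buys in exchange is brevity through the external reference. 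No gap on your side.
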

	\begin{proof}
		By \cite{Zhang2011} Notion 7.4 we know that Shur complement is positive semi-definite if $\mathbf{C}$ is non-singular. As $\mathbf{A}$ is positive definite, then $\mathbf{C}$ is non-singular. Thus, $\mathbf{K}$ must be positive semi-definite as well.
	\end{proof}

	\begin{lemma}[Zhang in \cite{Zhang2011}] \label{lemma:order}
		Let $\bM$ be a positive definite matrix, and $S$ be a subset of $[n]$, then
		\begin{equation}\label{eq:inv}
			(\mathbf{M}_S)^{-1} \preceq (\mathbf{M}^{-1})_S.
		\end{equation}
	\end{lemma}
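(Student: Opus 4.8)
The plan is to reduce to the $2\times 2$ block-matrix case by a permutation and then combine the block-inverse (Woodbury) identity with Lemma~\ref{lemma:shur_pos}. Since conjugation by a permutation matrix preserves the Loewner order and commutes with both operations $\mathbf{M}\mapsto(\mathbf{M}_S)^{-1}$ and $\mathbf{M}\mapsto(\mathbf{M}^{-1})_S$, I would first assume without loss of generality that $S=\{1,\dots,k\}$ with $k=|S|$, and write
\[
\mathbf{M}=\begin{pmatrix}\mathbf{A}&\mathbf{B}\\\mathbf{B}^\top&\mathbf{C}\end{pmatrix},\qquad \mathbf{M}_{SS}=\mathbf{A}.
\]
Because $\mathbf{M}\succ 0$, its principal submatrices $\mathbf{A}$ and $\mathbf{C}$ are positive definite (hence invertible), and the Schur complement $\tilde{\mathbf{A}}=\mathbf{C}-\mathbf{B}^\top\mathbf{A}^{-1}\mathbf{B}$ is positive definite as well.

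Next I would invoke the standard block-inverse formula: the top-left $k\times k$ block of $\mathbf{M}^{-1}$ equals $(\mathbf{A}-\mathbf{B}\mathbf{C}^{-1}\mathbf{B}^\top)^{-1}$, which by the Woodbury identity equals $\mathbf{A}^{-1}+\mathbf{A}^{-1}\mathbf{B}(\mathbf{C}-\mathbf{B}^\top\mathbf{A}^{-1}\mathbf{B})^{-1}\mathbf{B}^\top\mathbf{A}^{-1}$. In the notation of the lemma this is exactly $(\mathbf{M}^{-1})_{SS}=(\mathbf{M}_{SS})^{-1}+\mathbf{K}$, where $\mathbf{K}=\mathbf{A}^{-1}\mathbf{B}(\tilde{\mathbf{A}})^{-1}\mathbf{B}^\top\mathbf{A}^{-1}$ is precisely the matrix appearing in Lemma~\ref{lemma:shur_pos} with $\mathbf{Q}=\mathbf{M}$. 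Since $\mathbf{M}\succ 0$, that lemma gives $\mathbf{K}\succeq 0$, so $(\mathbf{M}_{SS})^{-1}\preceq(\mathbf{M}^{-1})_{SS}$.

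Finally I would lift this back to $n\times n$ matrices: by the definitions \eqref{eq:inverse} and \eqref{eq:slice} applied to $\mathbf{M}^{-1}$, one has $(\mathbf{M}^{-1})_S-(\mathbf{M}_S)^{-1}=\mathbf{I}_{:S}\big[(\mathbf{M}^{-1})_{SS}-(\mathbf{M}_{SS})^{-1}\big]\mathbf{I}_{:S}^\top$, and conjugating a positive semi-definite matrix by $\mathbf{I}_{:S}$ keeps it positive semi-definite, which yields \eqref{eq:inv}.

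The main obstacle is purely the bookkeeping: making sure the Woodbury correction term matches the matrix $\mathbf{K}$ of Lemma~\ref{lemma:shur_pos} verbatim (identifying $\tilde{\mathbf{A}}$ with the Schur complement $\mathbf{C}-\mathbf{B}^\top\mathbf{A}^{-1}\mathbf{B}$); the permutation reduction, the positive definiteness of the submatrices, and the final conjugation are all routine. If one prefers to avoid Lemma~\ref{lemma:shur_pos}, an equivalent route is to note that $\mathbf{B}\mathbf{C}^{-1}\mathbf{B}^\top\succeq 0$ gives $0\prec\mathbf{A}-\mathbf{B}\mathbf{C}^{-1}\mathbf{B}^\top\preceq\mathbf{A}$, and then apply operator-antitonicity of matrix inversion on positive definite matrices to get $\mathbf{A}^{-1}\preceq(\mathbf{A}-\mathbf{B}\mathbf{C}^{-1}\mathbf{B}^\top)^{-1}=(\mathbf{M}^{-1})_{SS}$.
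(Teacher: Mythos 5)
Your proof is correct and follows essentially the same route as the paper: permute $S$ to the leading block, use the block-inverse/Schur-complement formula to write $(\mathbf{M}^{-1})_{SS}=(\mathbf{M}_{SS})^{-1}+\mathbf{K}$, and conclude $\mathbf{K}\succeq 0$ from Lemma~\ref{lemma:shur_pos}; your explicit lift back to $n\times n$ via conjugation by $\mathbf{I}_{:S}$ and the alternative argument via operator-antitonicity of inversion are welcome but do not change the substance of the argument.
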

	\begin{proof}
		We suppose that $S$ is a sampling such that it selects a sub-matrix of size $k$. The proof of the ordering \eqref{eq:inv} is equivalent to showing the result for $k \times k$ matrix principal minors $(\bM_{SS})^{-1}$ from \eqref{eq:smaller_splice}. 
		
		In the following analysis, we suppose that the sub-matrix that is sampled with $S$ is located in upper-left corner. Let us denote this sub-matrix by  $\mathbf{A}$. 
		\begin{equation}
			\mathbf{M}=\begin{pmatrix}
				\mathbf{A} & \mathbf{B} \\
				\mathbf{B}^\top & \mathbf{C} 
			\end{pmatrix}
		\end{equation}
		Also let matrix $\mathbf{M}^{-1}$ have a s similar block decomposition
		\begin{equation}
			\mathbf{M}^{-1}=\begin{pmatrix}
				\mathbf{X} & \mathbf{Y} \\
				\mathbf{Y}^\top & \mathbf{Z} 
			\end{pmatrix}.
		\end{equation}
		Then we know that $\mathbf{X}$ is related to Shur complement of $\mathbf{A}$  by \cite[Theorem 2.4]{Zhang2011}. We denote the Shur complement of $\mathbf{A}$ as  $\tilde{\mathbf{A}}$. Hence, $\mathbf{X}=\mathbf{A}^{-1}+\mathbf{A}^{-1}\mathbf{B}\tilde{\mathbf{A}}^{-1}\mathbf{B}^\top \mathbf{A}^{-1}=\mathbf{A}^{-1}+\mathbf{K}$. Then as $\mathbf{K}$ is positive definite by Lemma \ref{lemma:shur_pos} we have
		\begin{equation} \label{eq:fefefefe}
			((\mathbf{M}^{-1})_{SS})=\mathbf{X}=\mathbf{A}^{-1}+\mathbf{K} \succeq \mathbf{A}^{-1} = ((\mathbf{M}_{SS})^{-1}).
		\end{equation}
		Should $S$ be a sampling that does not sample a principal sub-matrix in the upper left corner then we can consider the matrix $\mathbf{Z}=\mathbf{Q}^\top \mathbf{M}\mathbf{Q}$ where $\mathbf{Q}$ is a matrix that permutes the elements such that the desired sub-matrix is in the upper left position. Then we can define a sampling $S'$ s.t. $\mathbf{I}_{:S}= \mathbf{Q}\mathbf{I}_{:S'}$, which yields	
		\begin{equation*} 
			(\mathbf{M}_{SS})^{-1}\stackrel{\eqref{eq:smaller_splice}}=(\mathbf{I}_{:S}^\top \mathbf{M}\mathbf{I}_{:S})^{-1}=(\mathbf{I}_{:S'}^\top \mathbf{Q}^\top \mathbf{M}\mathbf{Q}\mathbf{I}_{:S'})^{-1} =(\mathbf{Z}_{S'S'})^{-1} \stackrel{\eqref{eq:fefefefe}}\succeq (\mathbf{Z}^{-1})_{S'S'}=(\mathbf{M}^{-1})_{SS}.
			\end{equation*}
	\end{proof}
	
		\begin{lemma}
			Let $S'$ and $S$ be two random valued samplings s.t. $S' \subset S \subset [n] $. Also let $\mathbf{X} \in \mathbb{R}^{n \times n}$ be a positive definite matrix.
			\begin{equation}\label{eq:nested}
			(\mathbf{X}_{S'})^{-1} \preceq (\mathbf{X}_{S})^{-1}
			\end{equation}
		\end{lemma}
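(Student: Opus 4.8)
The plan is to avoid block‑matrix bookkeeping altogether and instead use a variational description of the ``in‑place inverse'' operator. For any positive definite $\bX$ and any non‑empty $T\subseteq[n]$ I would first record the identity
\begin{equation*}
\braket{h,(\bX_T)^{-1}h}=\max_{z\in\R^n:\,z=z_T}\ \Big(2\braket{h,z}-\braket{z,\bX z}\Big),\qquad h\in\R^n,
\end{equation*}
where, as in the preamble, $z=z_T$ means that $z$ is supported on $T$. This is a one‑line check: writing $z=\bI_{:T}y$ with $y\in\R^{|T|}$ turns the right‑hand side into the concave quadratic $2\braket{\bI_{:T}^\top h,y}-\braket{y,\bX_{TT}y}$ in $y$ (and $\bX_{TT}\succ0$ since $\bX\succ0$ and $\bI_{:T}$ has full column rank), whose maximizer is $y=(\bX_{TT})^{-1}\bI_{:T}^\top h$; substituting this back and invoking \eqref{eq:inverse} gives exactly $\braket{h,(\bX_T)^{-1}h}$.

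With this in hand the lemma is essentially immediate. The asserted inequality is to be read pointwise: for each realization with $S'\subseteq S$ it is a statement about two fixed, zero‑padded matrices, so it suffices to fix $S'\subseteq S\subseteq[n]$ (the version in expectation, if that is the intended reading, then follows since $\mathbb{E}[\cdot]$ is monotone with respect to $\preceq$). For any $h\in\R^n$, every $z$ supported on $S'$ is also supported on $S$, so the feasible set in the characterization above for $S'$ is contained in the one for $S$; hence the maximum for $S$ is at least the maximum for $S'$, i.e.
\begin{equation*}
\braket{h,(\bX_{S'})^{-1}h}\ \le\ \braket{h,(\bX_{S})^{-1}h}.
\end{equation*}
Since $h$ is arbitrary, this is precisely $(\bX_{S'})^{-1}\preceq(\bX_{S})^{-1}$.

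For completeness I would also point out the alternative route that stays in the style of Lemma~\ref{lemma:order}: conjugate by a permutation so that $S'$ occupies the first $|S'|$ coordinates and $S\setminus S'$ the next $|S\setminus S'|$ ones, work inside the $|S|\times|S|$ principal block, write $\bX_{SS}$ in $2\times 2$ block form with top‑left block $\bA=\bX_{S'S'}$, and apply the block‑inverse formula; then $(\bX_{SS})^{-1}-\mathrm{diag}(\bA^{-1},0)$ factors as $\mathbf{V}\,\tilde{\bA}^{-1}\mathbf{V}^\top$, where $\mathbf{V}$ is the tall matrix with block $\bA^{-1}\bB$ stacked over $-\bI$ and $\tilde{\bA}$ is the Schur complement of $\bA$, and this is positive semidefinite because $\tilde{\bA}\succ0$ (the same Schur‑complement positivity used in Lemma~\ref{lemma:shur_pos}); zero‑padding back to $\R^{n\times n}$ finishes it. The one pitfall worth flagging — and the reason one cannot simply chain Lemma~\ref{lemma:order} with a ``restrict a PSD matrix to a principal block'' step — is that for a general $\mathbf{P}\succeq0$ one need not have $\mathbf{P}_T\preceq\mathbf{P}$; the positivity here genuinely uses the inverse/Schur structure. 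The main (mild) obstacle in this second approach is the permutation reduction, which the variational argument sidesteps entirely.
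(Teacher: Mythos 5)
Your proof is correct, and it takes a genuinely different route from the paper's. The paper reduces to the principal block $\mathbf{Y}=\bX_{SS}$ and chains Lemma~\ref{lemma:order} with a step justified only by ``pos.\ def.'', namely $(\mathbf{Y}^{-1})_{S'}\preceq(\mathbf{Y}^{-1})_{S}$, i.e.\ that zeroing out the rows and columns outside $S'$ of a positive definite matrix can only decrease it in the Loewner order. That is precisely the pitfall you flag: it already fails for $\mathbf{Q}=\begin{pmatrix}1 & 0.9\\ 0.9 & 1\end{pmatrix}$ with $S'=\{1\}$, since $\mathbf{Q}-\mathbf{Q}_{S'}$ has negative determinant; so the lemma is true, but not by that chain, and your argument in effect repairs the paper's proof rather than reproducing it. Your variational identity $\braket{h,(\bX_T)^{-1}h}=\max_{z=z_T}\bigl(2\braket{h,z}-\braket{z,\bX z}\bigr)$ is verified correctly (the maximizer $y=(\bX_{TT})^{-1}\bI_{:T}^\top h$ together with \eqref{eq:inverse} gives the stated value, and $\bX_{TT}\succ 0$ because $\bI_{:T}$ has full column rank), and monotonicity of a maximum under enlargement of the feasible support set immediately gives $(\bX_{S'})^{-1}\preceq(\bX_{S})^{-1}$ for each fixed realization, with no permutation or block bookkeeping. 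Your second sketch, writing $(\bX_{SS})^{-1}-\mathrm{diag}(\bA^{-1},0)=\mathbf{V}\tilde{\bA}^{-1}\mathbf{V}^\top$ with $\bA=\bX_{S'S'}$ and $\tilde{\bA}\succ 0$, is the correct Schur-complement version of what the paper presumably intended, staying within the machinery of Lemmas~\ref{lemma:shur_pos} and~\ref{lemma:order} and exhibiting the gap $(\bX_{S})^{-1}-(\bX_{S'})^{-1}$ explicitly as a positive semidefinite factorization, at the cost of the permutation reduction; the variational route is shorter, avoids block-inverse formulas entirely, and is the cleaner fix.
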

		\begin{proof}
 			Let $ \mathbf{Y} = \mathbf{X}_{SS}$ then, we can rewrite $\mathbf{X}_{S'} = \bI_{:S'}(\mathbf{I}_{S'S}\mathbf{Y}\mathbf{I}_{SS'})\bI_{S':} = \mathbf{Y}_{S'}$. By Lemma \ref{lemma:order} and positive definiteness, we know that $ (\mathbf{X}_{S'})^{-1}= (\mathbf{Y}_{S'})^{-1} \stackrel{\eqref{eq:inv}} \preceq (\mathbf{Y}^{-1})_{S'} \stackrel{\text{pos. def.}}\preceq (\mathbf{Y}^{-1})_{S} = (\mathbf{X}_{S})^{-1}. $
		\end{proof}
%
	\subsection{Parallelization parameters}

	\begin{lemma}
		\begin{equation}\label{eq:firstorder}
			0 \leq \sigma_1 \leq \theta \leq 1
		\end{equation}
	\end{lemma}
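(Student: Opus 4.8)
The plan is to treat the three inequalities in \eqref{eq:firstorder} separately, all through the single symmetric matrix
\[
\mathbf{T} \eqdef \mathbf{G}^{1/2}\,\mathbb{E}\!\left[(\mathbf{M}_{\hat S})^{-1}\right]\mathbf{G}^{1/2},
\]
for which $\sigma_1 = \lambda_{\min}(\mathbf{T})$ and $\theta = \lambda_{\max}(\mathbf{T})$ by \eqref{eq:sigma_1} and \eqref{eq:a_1} (under the standing assumptions on $\mathbf{M}$, $\mathbf{G}$ and on $\hat S$, the sampling is non-vacuous, so $(\mathbf{M}_{\hat S})^{-1}$ is well defined almost surely). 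First I would record that for each fixed nonempty $S$ the block $\mathbf{M}_{SS} = \mathbf{I}_{:S}^\top\mathbf{M}\mathbf{I}_{:S}$ is a principal submatrix of the positive definite matrix $\mathbf{M}$, hence itself positive definite, so $(\mathbf{M}_S)^{-1} = \mathbf{I}_{:S}(\mathbf{M}_{SS})^{-1}\mathbf{I}_{:S}^\top$ is symmetric and positive semidefinite. Taking expectation (linearity, and $x^\top\mathbb{E}[\cdot]x = \mathbb{E}[x^\top(\cdot)x]\ge 0$) and then conjugating by $\mathbf{G}^{1/2}$ preserves both properties, so $\mathbf{T}$ is symmetric and $\mathbf{T}\succeq 0$. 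Symmetry gives $\lambda_{\min}(\mathbf{T})\le\lambda_{\max}(\mathbf{T})$, i.e. $\sigma_1\le\theta$; positive semidefiniteness gives $\sigma_1 = \lambda_{\min}(\mathbf{T})\ge 0$. These two parts are essentially bookkeeping.

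The substantive inequality is $\theta\le 1$, equivalently $\mathbf{T}\preceq\mathbf{I}$, equivalently $\mathbb{E}[(\mathbf{M}_{\hat S})^{-1}]\preceq\mathbf{G}^{-1}$. I would obtain this from the pointwise bound $(\mathbf{M}_S)^{-1}\preceq\mathbf{M}^{-1}$, valid for every nonempty $S$. To prove the pointwise bound, set $\mathbf{U} = \mathbf{I}_{:S}$ (which has full column rank) and check that $\mathbf{P} \eqdef \mathbf{M}^{1/2}\mathbf{U}(\mathbf{U}^\top\mathbf{M}\mathbf{U})^{-1}\mathbf{U}^\top\mathbf{M}^{1/2}$ satisfies $\mathbf{P}^\top = \mathbf{P}$ and $\mathbf{P}^2 = \mathbf{P}$; hence $\mathbf{P}$ is an orthogonal projector, so $\mathbf{P}\preceq\mathbf{I}$, and conjugating by $\mathbf{M}^{-1/2}$ gives $(\mathbf{M}_S)^{-1} = \mathbf{M}^{-1/2}\mathbf{P}\mathbf{M}^{-1/2}\preceq\mathbf{M}^{-1}$. (Equivalently, one may use the variational identity $g^\top(\mathbf{M}_S)^{-1}g = \max\{2g^\top h - \braket{h,\mathbf{M}h} : h_i = 0 \text{ for all } i\notin S\}$, which is a maximum over a subspace of $\max_h\{2g^\top h - \braket{h,\mathbf{M}h}\} = g^\top\mathbf{M}^{-1}g$.) Taking expectation yields $\mathbb{E}[(\mathbf{M}_{\hat S})^{-1}]\preceq\mathbf{M}^{-1}$; combining with $\mathbf{G}\preceq\mathbf{M}$ from \eqref{eq:ordering}, which on inversion reverses to $\mathbf{M}^{-1}\preceq\mathbf{G}^{-1}$, gives $\mathbb{E}[(\mathbf{M}_{\hat S})^{-1}]\preceq\mathbf{G}^{-1}$. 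Conjugating both sides by $\mathbf{G}^{1/2}$ produces $\mathbf{T}\preceq\mathbf{I}$, hence $\theta = \lambda_{\max}(\mathbf{T})\le 1$.

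The only real obstacle is the pointwise bound $(\mathbf{M}_S)^{-1}\preceq\mathbf{M}^{-1}$, and specifically the observation that one must go through $\mathbf{M}^{-1}$ directly rather than through Lemma~\ref{lemma:order}: although $(\mathbf{M}_S)^{-1}\preceq(\mathbf{M}^{-1})_S$, the ``masked'' matrix $(\mathbf{M}^{-1})_S$ is in general \emph{not} $\preceq\mathbf{M}^{-1}$ (zeroing out the off-block entries of a positive definite matrix destroys the Loewner ordering), so Lemma~\ref{lemma:order} cannot be chained to reach the conclusion; the projector (or variational) argument sidesteps this. Everything else — congruence monotonicity $\mathbf{X}\preceq\mathbf{Y}\Rightarrow\mathbf{C}^\top\mathbf{X}\mathbf{C}\preceq\mathbf{C}^\top\mathbf{Y}\mathbf{C}$, order reversal under inversion, and linearity and monotonicity of $\mathbb{E}[\cdot]$ — is routine.
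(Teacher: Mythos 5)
Your proof is correct, but it follows a different route from the paper's. The paper first applies Jensen's inequality to the convex map $\lambda_{\max}(\cdot)$ to pull the expectation outside, then invokes Lemma~\ref{lemma:order} (i.e.\ $(\mathbf{M}_S)^{-1}\preceq(\mathbf{M}^{-1})_S$) and the nested-sampling inequality \eqref{eq:nested} to reach $\lambda_{\max}(\mathbf{G}^{1/2}\mathbf{M}^{-1}\mathbf{G}^{1/2})$, and finally uses \eqref{eq:ordering}; you instead never leave the Loewner order: you prove the deterministic bound $(\mathbf{M}_S)^{-1}\preceq\mathbf{M}^{-1}$ directly (your projector/variational argument, which is in effect \eqref{eq:nested} specialized to $S'=S$, $S=[n]$, with a self-contained proof), take expectations, invert \eqref{eq:ordering}, and conjugate by $\mathbf{G}^{1/2}$. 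What your version buys is that it dispenses with Jensen entirely and, more importantly, never passes through the intermediate matrix $(\mathbf{M}^{-1})_{\hat S}$; your closing caveat is exactly on point here, since the paper's displayed chain does go through $\mathbb{E}[\lambda_{\max}(\mathbf{G}^{1/2}(\mathbf{M}^{-1})_{\hat S}\mathbf{G}^{1/2})]$ and then asserts it is bounded by $\lambda_{\max}(\mathbf{G}^{1/2}\mathbf{M}^{-1}\mathbf{G}^{1/2})$, a step that \eqref{eq:nested} does not literally cover and that can fail as stated: for $\mathbf{M}=\mathbf{G}=\bigl(\begin{smallmatrix}1&\rho\\ \rho&1\end{smallmatrix}\bigr)$ and $S=\{1\}$ one gets $\lambda_{\max}(\mathbf{G}^{1/2}(\mathbf{M}^{-1})_S\mathbf{G}^{1/2})=\tfrac{1}{1-\rho^2}>1=\lambda_{\max}(\mathbf{G}^{1/2}\mathbf{M}^{-1}\mathbf{G}^{1/2})$. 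The paper's argument is easily repaired by replacing the middle quantity with $\mathbb{E}[\lambda_{\max}(\mathbf{G}^{1/2}(\mathbf{M}_{\hat S})^{-1}\mathbf{G}^{1/2})]$ (Jensen alone for the first step, then \eqref{eq:nested} with $S=[n]$), but your expectation-level Loewner argument reaches the same conclusion more cleanly and sidesteps the issue altogether.
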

	\begin{proof}
		The proof of the first inequality follows by noting that $\sigma_1$ is the smallest eigenvalue of a positive definite matrix; the second is by the definition of $ \theta$ and $\sigma_1$. The last inequality follows from the convexity of $\lambda_{\max}$ operator and Jensen's inequality, namely,
		\begin{equation}\label{eq:jensen}
			\mathbb{E}[\lambda_{\max}(\mathbf{X})]\geq \lambda_{\max}(\mathbb{E}[\mathbf{X}]).
		\end{equation}
		\begin{eqnarray}
			\theta & \stackrel{\eqref{eq:a_1}}  = & \lambda_{\max}(\mathbf{G}^{1/2}\mathbb{E}[(\mathbf{M}_{\hat{S}})^{-1}]\mathbf{G}^{1/2}) \stackrel{\eqref{eq:inv}}
			\leq  \mathbb{E}[\lambda_{\max}(\mathbf{G}^{1/2} (\mathbf{M}^{-1})_{\hat{S}} \mathbf{G}^{1/2})] \\
			& \stackrel{\eqref{eq:nested}}\leq & \mathbb{E}[\lambda_{\max}(\mathbf{G}^{1/2} (\mathbf{M}^{-1})\mathbf{G}^{1/2})] =  \mathbb{E}\left[\frac{1}{\lambda_{\min}(\mathbf{G}^{-1/2} (\mathbf{M}) \mathbf{G}^{-1/2})} \right] \stackrel{\eqref{eq:ordering}} \leq 1 \notag
		\end{eqnarray}
	\end{proof}
	
	\begin{remark}
		When $\bM = \bG$ (quadratic cost function), as $ \theta<1$, then $b \leq c$. Consequently, $\sigma_p \geq \sigma_1$.
	\end{remark}
	\hfil \\
	\begin{proposition}[Bound on $ \theta$ for $\tau$-list samplings]\label{prop:bound}
		Let $\bM=\bG$ as in \eqref{eq:smooth} and $\hat{S}$ be parallel $(\tau,c)$-list sampling, then 
		\begin{equation}
		 \theta \leq \frac{\tau}{n} \operatorname{cond}(\bM).
		\end{equation}
	\end{proposition}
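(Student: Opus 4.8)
The plan is to unwind the definition of $\theta$ for a parallel $(\tau,c)$-list sampling and reduce it, via the eigenvalue bounds already established, to a statement about a single $\tau$-list sampling, then exploit the special combinatorial structure of list samplings. First I would note that since $\bM = \bG$, the quantity $\theta = \lambda_{\max}\bigl(\bM^{1/2}\mathbb{E}[(\bM_{\hat{S}})^{-1}]\bM^{1/2}\bigr)$. Applying Lemma~\ref{lemma:order} inside the expectation (monotonicity of $\lambda_{\max}$ under $\preceq$) gives $\theta \leq \lambda_{\max}\bigl(\bM^{1/2}\mathbb{E}[(\bM^{-1})_{\hat{S}}]\bM^{1/2}\bigr)$, and crucially $\mathbb{E}[(\bM^{-1})_{\hat{S}}]$ can be computed entrywise: its $(i,j)$ entry is $\mathbf{P}_{ij}(\bM^{-1})_{ij}$, i.e.\ $\mathbb{E}[(\bM^{-1})_{\hat{S}}] = \mathbf{P}\circ \bM^{-1}$ (Hadamard product), where $\mathbf{P}$ is the probability matrix of the sampling.

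The next step is to control $\mathbf{P}$ for the parallel $(\tau,c)$-list sampling. I expect that the parallel aggregation of $c$ independent copies does not change the relevant bound — the probability matrix of the parallel sampling dominates that of a single $\tau$-list sampling only by lower-order terms, or more cleanly one argues directly that $\theta$ for the parallel sampling is bounded by $\theta$ for a single $\tau$-list sampling (this matches the paper's earlier remark that aggregation only helps). So the core estimate is for a single $\tau$-list sampling $\hat{S}$, where each coordinate lies in $\hat{S}$ with probability $p_i = \tau/n$, and pairs of coordinates are in $\hat{S}$ together only if they are within cyclic distance $\tau$; in particular $\mathbf{P}_{ij} \leq \tau/n$ for all $i,j$, with equality on the diagonal. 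Thus $\mathbf{P} \circ \bM^{-1}$ is, in a suitable sense, dominated by $\tfrac{\tau}{n}\bM^{-1}$; the cleanest route is to write $\mathbf{P} \preceq \tfrac{\tau}{n}(\text{something})$ or to bound $\lambda_{\max}(\bM^{1/2}(\mathbf{P}\circ\bM^{-1})\bM^{1/2})$ directly.

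For the final inequality I would use the standard Schur-product fact: if $\mathbf{P} \preceq \tfrac{\tau}{n}\mathbf{J}$ is false in general, one instead uses that for list samplings $\mathbf{P}$ itself is a nonnegative combination that admits the bound $\mathbf{P} \circ \bM^{-1} \preceq \tfrac{\tau}{n}\,\bM^{-1}$ is also not automatic; the safe argument is $\lambda_{\max}(\bM^{1/2}(\mathbf{P}\circ\bM^{-1})\bM^{1/2}) \leq \lambda_{\max}(\mathbf{P}\circ\bM^{-1})\cdot\lambda_{\max}(\bM)$, and then bound $\lambda_{\max}(\mathbf{P}\circ\bM^{-1}) \leq \lambda_{\max}(\bM^{-1})\cdot \tfrac{\tau}{n}$ using that the Hadamard product $\mathbf{P}\circ\bM^{-1}$ is dominated entrywise in the PSD order by scaling (or using $\lambda_{\max}(\mathbf{P}) = \tau/n$ for list samplings, since $\mathbf{P}$ is a circulant-type matrix, together with the Schur product theorem bound $\lambda_{\max}(A\circ B)\leq \lambda_{\max}(A)\lambda_{\max}(B)$ for PSD $A,B$). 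Combining, $\theta \leq \tfrac{\tau}{n}\lambda_{\max}(\bM)\lambda_{\max}(\bM^{-1}) = \tfrac{\tau}{n}\operatorname{cond}(\bM)$.

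The main obstacle I anticipate is making precise \emph{which} matrix norm/PSD-domination statement about $\mathbf{P}$ is actually true for list samplings and carrying the constant $\tau/n$ through the Hadamard product without slack — in particular verifying $\lambda_{\max}(\mathbf{P}(\hat{S})) = \tau/n$ for the $\tau$-list sampling (which follows from $\mathbf{P}$ being a circulant matrix whose row sums equal $\tau\cdot\tau/n$... so actually $\lambda_{\max}(\mathbf{P})$ could be as large as $\tau^2/n$, not $\tau/n$), and then correctly handling the passage from the parallel $(\tau,c)$ sampling back to a single $\tau$-list sampling. Resolving this tension — i.e.\ whether the right bound comes from $\lambda_{\max}(\mathbf{P})$ or from the entrywise bound $\mathbf{P}_{ij}\leq \tau/n$ combined with a more careful PSD argument (e.g.\ using $\mathbf{P} = \mathbb{E}[\bI_{:\hat{S}}\bI_{:\hat{S}}^\top]$ so that $\mathbf{P} \circ \bM^{-1} = \mathbb{E}[(\bM^{-1})_{\hat{S}}] \preceq \mathbb{E}[\lambda_{\max}((\bM^{-1})_{\hat{S}})\,\tfrac{n}{\tau}\,\mathbf{P}]$-type manipulations) — is the crux, and I would resolve it in favor of the entrywise/structural argument specific to list samplings.
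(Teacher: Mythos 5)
Your plan (reduce to a single $\tau$-list sampling, apply Lemma~\ref{lemma:order}, and write $\mathbb{E}[(\bM^{-1})_{\hat{S}}]=\mathbf{P}\circ\bM^{-1}$) is sound, but the step that actually produces the factor $\nicefrac{\tau}{n}$ is never carried out: you end by saying you would ``resolve the tension in favor of the entrywise/structural argument,'' which is precisely the part that needs a proof, so as written this is a gap. Your own diagnosis of the failure mode is correct: for a $\tau$-list sampling $\mathbf{P}$ is a nonnegative circulant matrix whose row sums all equal $\nicefrac{\tau^2}{n}$, hence $\lambda_{\max}(\mathbf{P})=\nicefrac{\tau^2}{n}$ (not $\nicefrac{\tau}{n}$), and the generic bound $\lambda_{\max}(A\circ B)\le\lambda_{\max}(A)\lambda_{\max}(B)$ only gives $\theta\le\frac{\tau^2}{n}\operatorname{cond}(\bM)$, losing a factor $\tau$; the entrywise bound $\mathbf{P}_{ij}\le\nicefrac{\tau}{n}$ alone gives no PSD domination. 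The missing ingredient that rescues your route is the sharper Hadamard inequality through the \emph{diagonal} of $\mathbf{P}$: since Hadamard multiplication by a PSD matrix preserves the semidefinite order (Schur product theorem), $\mathbf{P}\circ\bM^{-1}\preceq\mathbf{P}\circ\bigl(\lambda_{\max}(\bM^{-1})\bI\bigr)=\lambda_{\max}(\bM^{-1})\,\bD(p)=\frac{\tau}{n}\lambda_{\max}(\bM^{-1})\bI$, because $p_i=\nicefrac{\tau}{n}$ for the $\tau$-list sampling; conjugating by $\bM^{1/2}$ then yields $\theta\le\frac{\tau}{n}\lambda_{\max}(\bM^{-1})\lambda_{\max}(\bM)=\frac{\tau}{n}\operatorname{cond}(\bM)$. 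With that one line added your argument closes (and, incidentally, it works for any sampling with uniform marginals $p_i=\nicefrac{\tau}{n}$, not just list samplings); without it, the proposal is an outline, not a proof.

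For comparison, the paper avoids probability matrices and Hadamard products entirely. It writes $\theta=\max_{\norm{\bM^{-1/2}y}=1}\braket{y,\mathbb{E}[(\bM_{\hat{S}})^{-1}]y}$, uses that the $\tau$-list sampling is uniform over the $n$ cyclic windows $S_1,\dots,S_n$, bounds each term by $\braket{y_{S_i},(\bM_{S_iS_i})^{-1}y_{S_i}}\le\lambda_{\max}(\bM^{-1})\norm{y_{S_i}}^2$, and then uses the combinatorial fact that every coordinate lies in exactly $\tau$ of the $n$ windows, so $\sum_{i=1}^n\norm{y_{S_i}}^2=\tau\norm{y}^2$, finishing with $\norm{y}^2\le\lambda_{\max}(\bM)$ on the constraint set; that window-counting identity is where the paper's $\nicefrac{\tau}{n}$ comes from. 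Your reduction from the parallel $(\tau,c)$-list sampling to a single $\tau$-list sampling is fine and matches the paper, since $\theta$ in \eqref{eq:a_1} involves only one copy of $\hat{S}$.
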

	
	\begin{proof}
		
		\begin{eqnarray}
			 \theta&\stackrel{\eqref{eq:a_1}} =& \lambda_{max}(\bM^{1/2}\mathbb{E}[(\bM_{\hat{S}})^{-1} ]\bM^{1/2})\\
			   &\stackrel{} =& \max_{\norm{x}=1} \braket{\bM^{1/2}x,\mathbb{E}[(\bM_{\hat{S}})^{-1}]\bM^{1/2}x}\\
			   &\stackrel{} =& \max_{\norm{\bM^{-1/2}y}=1} \braket{y,\mathbb{E}[(\bM_{\hat{S}})^{-1}]y}\\
			   &\stackrel{\text{Def. \ref{def:tau-list}}} =& \max_{\norm{M^{-1/2}y}=1} \frac{1}{n}\sum_{i=1}^{n}\braket{y,(\bM_{S_i})^{-1}y}		  \\
			   & = & \max_{\norm{M^{-1/2}y}=1} \frac{1}{n}\sum_{i=1}^{n}\braket{\bI_{S:}y,(\bM_{S_iS_i})^{-1}\bI_{S:}y}		  \\
			   & \stackrel{\eqref{eq:nested}} \leq & \max_{\norm{M^{-1/2}y}=1} \frac{1}{n}\sum_{i=1}^{n} \lambda_{max}((\bM)^{-1})\norm{\bI_{S:}y}^2\\
			   & = & \frac{\tau}{n} \lambda_{max}(\bM^{-1}) \max_{\norm{M^{-1/2}y}=1} \norm{y}^2\\
			   & = & \frac{\tau}{n}\operatorname{cond}(\bM)
		\end{eqnarray}
	\end{proof}
	Proposition \ref{prop:bound} can be useful for classes of problems where we have information about the condition number of the matrix $\bM$ in Assumption \ref{ass:smooth}, and $n \gg \operatorname{cond}(\bM)$. In these circumstance, the bound can be used as a good proxy for $\theta$. A prime example of such problem class are banded Toeplitz matrices where condition number is usually independent of $n$ but rather depends on the band width.

	\section{Analysis and Comparison with Existing Methods}
	\subsection{Theoretical comparison PSN with PCDM}
	
	Parallel coordinate descend method (PCDM) is a powerful parallel optimization algorithm analyzed in \cite{PCDM} and \cite{Richtarik2012}. We would like to compare the performance of this method to the current parallel method. The complexity of PCDM can be expressed using the language of this paper and the paper \cite{SDNA} by,
	\begin{equation}\label{eq:sigma_3}
		\sigma_3 = \lambda_{\min}(\mathbf{G}^{1/2}\mathbf{D}(v^{-1})\mathbf{D}(p)\mathbf{G}^{1/2}).
	\end{equation}
	
	To have a fair comparison, while using parallel $(\tau,c)$-nice sampling for PSN, this in turn would corresponds to $(\tau c)$-nice sampling for PCDM such that access the same number of coordinates is maintained. Also, the assumptions of the PCDM algorithm are a little different in what we assumed here. For example, in \cite{PCDM}, they assume that matrix $\mathbf{M}$ in the quadratic over-approximation has a decomposition such that $\mathbf{M} = \mathbf{A}^\top \mathbf{A}$. The the condition for $v$ in expression \eqref{eq:sigma_3} can be deduced from basic considerations where $v = \lambda_{\max}(\mathbf{A}^\top \mathbf{A})$, which defines $\sigma_b$, or by using structural sparsity with \cite[Proposition 5.1]{ESO}, we can calculate vector $v$ as follows
	\begin{equation} \label{eg:pcdmeq}
		v_i = \sum_{j=1}^{m}\left(1+ \frac{(|J_j|-1)(\tau c -1)  }{\max(n-1,1)}\right)\bA_{ji}
	\end{equation}
	where $J_j$ denotes $J_j:=\{i\in[n] \;|\; \bA_{ji}\neq 0\}$. 
	
	As we do not have the decomposition at disposal currently. Let us assume that $\bA$ is fully dense, and thus $|J_j| = n $ $\forall j$. In this settings, $v_i$ reduces to $v_i = \tau c \mathbf{M}_{ii}$, and $p_i = \frac{\tau c}{n}$. Thus we can see that in fact $\sigma_3$ does not depend on $\tau c $ at all, and we do not gain any theoretical speedup in convergence rate. This exemplifies the theoretical contribution made in \cite{SDNA} and this paper, which show the improvement in convergence rates without sparsity patterns as $\sigma_1$ and $\sigma_p$ depend on $\tau$ or $c$ even in the fully dense scenario. To illustrate this, we calculate the value of $\sigma_p$ for a random matrix ($n=400$) in the table \ref{tbl:tbl1}.
	
	\begin{figure}
		\centering
		\begin{tabular}{ | l | l | l | p{1cm} |  p{1cm} |p{1cm} |}
			\hline
			$c$ (cores) & $\tau$ & $\tau c$ & $\sigma_b \times 10^{-6}$ & $\sigma_3 \times 10^{-6}$ & $\sigma_p \times 10^{-6}$ \\ \hline 
			1 & 3 & 3 & 0.0015 & 0.0019 & 0.0058 \\  \hline
			2 & 3 & 6 & 0.0029 & 0.0019 & 0.0113\\ \hline
			4 & 3 & 12 & 0.0058 & 0.0019 & 0.0214\\  \hline
			8 & 3 & 24 & 0.0117 & 0.0019 & 0.0386\\  \hline
			16 & 3 & 48 & 0.0233 & 0.0019 & 0.0646\\  \hline
			32 & 3 & 96 & 0.0466 & 0.0019 & 0.0975\\  \hline
			64 & 3 & 192 & 0.0933 & 0.0019 & 0.1308\\  \hline
			128 & 3 & 384 & 0.1866 & 0.0019 & 0.1578\\  \hline
		\end{tabular}
		\caption{The theoretical convergence rates for quadratic problems with $\mathbf{A}^\top \mathbf{A} = \mathbf{M}$, where $\mathbf{A}$ was $400 \times 400$ matrix. The entries of the matrix $\bA$ were sampled from a standard normal distribution. The basic approach with the largest eigenvalue $\sigma_b$ works better only when $\tau c$ is very big, however given the difficulty of computing inverting large matrices, this approach is often impractical.}
		\label{tbl:tbl1}
	\end{figure}
	
	\begin{figure}
		\centering
		\begin{tabular}{ | l | l | l | p{1cm} |  p{1cm} |p{1cm} |}
			\hline
			$c$ (cores) & $\tau$ & $\tau c$ & $\sigma_b \times 10^{-6}$ & $\sigma_3 \times 10^{-6}$ & $\sigma_p \times 10^{-6}$ \\ \hline
			1 & 3 & 3 & 0.0604 & 0.1744 & 0.2679 \\ \hline
			2 & 3 & 6 & 0.1208 & 0.2305 & 0.5199\\ \hline
			4 & 3 & 12 & 0.2416 & 0.2747 & 0.9818\\ \hline
			8 & 3 & 24 & 0.4833 & 0.3038 & 1.7664\\ \hline
			16 & 3 & 48 & 0.9666 & 0.3208 & 2.9418\\ \hline
			32 & 3 & 96 & 1.9332 & 0.3300 & 4.4086\\ \hline
			64 & 3 & 192 & 3.8663 & 0.3348 & 5.8727\\ \hline
			128 & 3 & 384 & 7.7326 & 0.3373 & 7.0420\\ \hline
		\end{tabular}
		\caption{The theoretical convergence rates for quadratic problems with  $\mathbf{A}^\top \mathbf{A} = \mathbf{M}$ where $\mathbf{A}$ was $400\times400$ matrix. The matrix $\mathbf{A}$ has also sparse structure where only approximately one third of the elements were non-zero. The entries were sample from a standard normal distribution. The most basic approach with the largest}
		\label{tbl:tbl2}
	\end{figure}
	
	However, one has to bear in mind that we assumed that the matrix $\bA$ was fully dense, which is the worst case scenario for the PCDM algorithm. Unfortunately, as there is not an easy way to present a comprehensive theoretical comparison, we only present a one based on generated random matrices which have either dense or sparse structure. In an experiment with sparse matrix (only one third of the elements are present) and using $\eqref{eg:pcdmeq}$, we arrived at the results summarized in Table \ref{tbl:tbl2}. These values can be almost directly compared as the cost of inversion of $3 \times 3$ matrix is too small to influence the cost of one iteration significantly. The instruction level parallelism can be utilized to greater extent as more computation is done with the same amount of information leading to better operational intensity \cite{Williams2009}.
	
	\subsection{$\rho$-matrix analysis}
	In the following analysis, we compute the convergence rates for the serial and parallel algorithm, applied to a specific problem, exactly. We minimize the quadratic function $\frac{1}{2}x^\top \mathbf{M} x $ where $\mathbf{M}$ has a special structure that we call $\rho$-matrix. The reason for introducing this special type of problem is that for this problem an analytical expression for $\sigma_1$ and $ \theta$ could be found and intuition about behavior of these theoretical constants can be illustrated fully. Since the parallel speedup depends only on parameter $ \theta$ we can predict the theoretical speedup for this class of matrices easily. The theoretical speedup is presented in Figure \ref{fig:theoretical_comp}.
	 
	\begin{definition}
		We define a $\rho$-matrix to be any $n \times n $ matrix that has the following structure,
		$$\mathbf{M} =
		\begin{pmatrix}
		1 & \rho & \cdots & \rho \\
		\rho & 1 & \cdots & \rho \\
		\vdots  & \vdots  & \ddots & \vdots  \\
		\rho & \rho & \cdots & 1
		\end{pmatrix}$$
		with $0<\rho<1$.
	\end{definition}
	
	\begin{proposition}
		Suppose we have a $\rho$-matrix $\mathbf{M}$ with $0<\rho<1$ as a parameter. Then $\sigma_1$ and $ \theta$ can be expressed as functions of $n$ and $\tau$ for $\tau$-nice sampling as follows 
		\begin{equation}
			\sigma_1=(1-\rho)\left(1-\frac{(\tau-1)B(\tau)}{(n-1)A(\tau)}\right)\frac{\tau A(\tau)}{n}
		\end{equation}
		and 
		\begin{equation}
			 \theta=(n\rho-\rho+1)\left(n\frac{(\tau-1)B(\tau)}{(n-1)A(\tau)}-\frac{(\tau-1)B(\tau)}{(n-1)A(\tau)}+1\right)\frac{\tau A(\tau)}{n}
		\end{equation}
		where we use function
		\begin{equation}
			A(\tau)=\frac{((\tau-2)\rho+1)}{(1-\rho)((\tau-1)\rho+1)}
		\end{equation}
		and 
		\begin{equation}
			B(\tau)=-\frac{\rho}{(1-\rho)((\tau-1)\rho+1)}.
		\end{equation}
	\end{proposition}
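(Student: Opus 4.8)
The plan is to exploit the fact that a $\rho$-matrix is a rank-one perturbation of a multiple of the identity: writing $\mathbf{1} := \sum_{i=1}^n e_i$ for the all-ones vector, we have $\mathbf{M} = (1-\rho)\mathbf{I} + \rho\,\mathbf{1}\mathbf{1}^\top$, and since $f(x) = \tfrac12 x^\top \mathbf{M}x$ is quadratic we have $\mathbf{G} = \mathbf{M}$ (cf.\ \eqref{eq:ordering}). Hence by \eqref{eq:sigma_1} and \eqref{eq:a_1} it suffices to diagonalize the matrix $\mathbf{M}^{1/2}\mathbb{E}[(\mathbf{M}_{\hat{S}})^{-1}]\mathbf{M}^{1/2}$ and read off its smallest and largest eigenvalues.

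First I would compute $(\mathbf{M}_{SS})^{-1}$ for a fixed $S$ with $|S|=\tau$. The principal submatrix $\mathbf{M}_{SS}$ is again a $\tau\times\tau$ $\rho$-matrix, namely $\mathbf{M}_{SS} = (1-\rho)\mathbf{I}_\tau + \rho\,\mathbf{1}_\tau\mathbf{1}_\tau^\top$, so the Sherman--Morrison formula gives $(\mathbf{M}_{SS})^{-1}$ with every diagonal entry equal to $A(\tau)$ and every off-diagonal entry equal to $B(\tau)$ --- this is precisely how the functions $A$ and $B$ in the statement arise. Lifting back to $\mathbb{R}^{n\times n}$ via \eqref{eq:inverse}, the matrix $(\mathbf{M}_S)^{-1}$ has entry $A(\tau)$ in position $(i,i)$ for $i\in S$, entry $B(\tau)$ in position $(i,j)$ for distinct $i,j\in S$, and zeros elsewhere. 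Taking expectation over the $\tau$-nice sampling $\hat{S}$, for which $\mathbb{P}(i\in\hat{S}) = \tau/n$ and $\mathbb{P}(\{i,j\}\subseteq\hat{S}) = \tau(\tau-1)/(n(n-1))$ for $i\neq j$, yields
\[
\mathbb{E}\big[(\mathbf{M}_{\hat{S}})^{-1}\big] = \alpha\,\mathbf{I} + \beta\,\mathbf{1}\mathbf{1}^\top, \qquad \alpha = \frac{\tau}{n}\Big(A(\tau) - \tfrac{\tau-1}{n-1}B(\tau)\Big),\quad \beta = \frac{\tau(\tau-1)}{n(n-1)}B(\tau).
\]

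The key structural observation is that $\mathbf{M}$, $\mathbf{I}$ and $\mathbf{1}\mathbf{1}^\top$ pairwise commute (each is a polynomial in $\mathbf{1}\mathbf{1}^\top$), so they are simultaneously diagonalizable with common invariant subspaces: the line $\operatorname{span}(\mathbf{1})$, on which $\mathbf{M}$ acts as $1+(n-1)\rho$ and $\mathbf{1}\mathbf{1}^\top$ as $n$, and its $(n-1)$-dimensional orthogonal complement, on which $\mathbf{M}$ acts as $1-\rho$ and $\mathbf{1}\mathbf{1}^\top$ as $0$. Consequently $\mathbf{M}^{1/2}\mathbb{E}[(\mathbf{M}_{\hat{S}})^{-1}]\mathbf{M}^{1/2} = \mathbf{M}\big(\alpha\mathbf{I} + \beta\,\mathbf{1}\mathbf{1}^\top\big)$, whose eigenvalues are $\mu_1 := (1+(n-1)\rho)(\alpha + n\beta)$ (simple, eigenvector $\mathbf{1}$) and $\mu_2 := (1-\rho)\alpha$ (multiplicity $n-1$). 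Substituting the expressions for $\alpha$ and $\beta$ and simplifying --- in particular checking that $\alpha + n\beta = \tfrac{\tau}{n}\big(A(\tau)+(\tau-1)B(\tau)\big)$ --- one identifies $\mu_2$ with the claimed formula for $\sigma_1$ and $\mu_1$ with the claimed formula for $\theta$. It remains to pin down which eigenvalue is which: $\sigma_1 = \lambda_{\min} = \mu_2$ and $\theta = \lambda_{\max} = \mu_1$ provided $\mu_1 \geq \mu_2$, and after clearing the (positive) denominators this inequality reduces to $n-1 \geq (\tau-2) + \tfrac{\tau-1}{n-1}$, which holds since $\tau\leq n$; alternatively $\sigma_1 \leq \theta$ is already guaranteed by \eqref{eq:firstorder}.

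I expect the main obstacle to be purely computational: carrying the Sherman--Morrison coefficients $A(\tau),B(\tau)$ through the lifting and the expectation without sign errors, and then matching $\mu_1$ and $\mu_2$ against the somewhat unwieldy closed forms for $\theta$ and $\sigma_1$ stated in the proposition. The conceptual content --- reducing everything to the two-dimensional eigenstructure of the commuting family generated by $\mathbf{1}\mathbf{1}^\top$ --- is straightforward; the risk lies in the bookkeeping.
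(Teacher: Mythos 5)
Your proof is correct, and it reaches the same two eigenvalues as the paper, but by a somewhat cleaner route. The paper obtains $A(\tau)$ and $B(\tau)$ via Cramer's rule, cofactor minors and a determinant recurrence, and then argues that the normalized matrix $\mathbb{E}[(\mathbf{M}_{\hat{S}})^{-1}]$ is again a ``$\rho$-matrix'' with parameter $\rho_N=\frac{(\tau-1)B(\tau)}{(n-1)A(\tau)}$, so that the extreme eigenvalues of the product are obtained by pairing the eigenvalues of the two factors. You instead exploit the rank-one structure $\mathbf{M}=(1-\rho)\mathbf{I}+\rho\,\mathbf{1}\mathbf{1}^\top$: Sherman--Morrison gives $A(\tau),B(\tau)$ with no determinant bookkeeping, and the common eigenbasis $\{\operatorname{span}(\mathbf{1}),\ \mathbf{1}^{\perp}\}$ of the commuting family makes the identity $\mathbf{M}^{1/2}\mathbb{E}[(\mathbf{M}_{\hat{S}})^{-1}]\mathbf{M}^{1/2}=\mathbf{M}(\alpha\mathbf{I}+\beta\mathbf{1}\mathbf{1}^\top)$ and the two eigenvalues $\mu_1=(1+(n-1)\rho)(\alpha+n\beta)$, $\mu_2=(1-\rho)\alpha$ immediate; I checked that these match the stated formulas (indeed $\alpha=(1-\rho_N)\tfrac{\tau A(\tau)}{n}$ and $\alpha+n\beta=(1+(n-1)\rho_N)\tfrac{\tau A(\tau)}{n}$). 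Your approach also buys something the paper glosses over: since $B(\tau)<0$, one has $\rho_N<0$, so the paper's phrase ``smallest eigenvalue of the product equals the product of the two smallest eigenvalues'' is not literally accurate (the eigenvalue ordering of the second factor is reversed); what is true, and what both arguments actually use, is that eigenvalues multiply within each common eigenspace. Your explicit verification that $\mu_1\ge\mu_2$, reducing to $n-1\ge(\tau-2)+\tfrac{\tau-1}{n-1}$ for $\tau\le n$, is exactly the missing step that pins down $\sigma_1=\mu_2$ and $\theta=\mu_1$ (the appeal to \eqref{eq:firstorder} alone would not identify which closed form is which), so your write-up is in this respect more complete than the paper's.
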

	\begin{proof}
		There are $n$ eigenvalues of the matrix $\mathbf{M}$; $n-1$ of them are degenerate and equal to $1-\rho$. This can be verified by plugging the suitable eigenvector which has only two non-zero elements ${-1,1}$. The last eigenvalue can be computed using the trace of the matrix. $\lambda=n-(1-\rho)(n-1)$. After solving for this eigenvalue, we immediately see it is the biggest one, thus, $\lambda_{\max}(\mathbf{M})=n\rho-\rho+1$.
		
		Given a $\tau$-nice sampling with $\tau$ as parameter, the matrix that is sub-sampled is a $\rho$-matrix of the size $\tau \times \tau$. Due to the symmetry of the problem, any subset of $\tau$ coordinates slices out the same matrix. Additionally, the cofactor matrices for diagonal elements are all equal, and the cofactor matrices for off-diagonal elements are all equal as well.
		
		As the entries in the inverse of the matrix depend on the determinant of the cofactor matrix only, all diagonal and non-diagonal elements of the inverse are equal. In addition, taking expectation does preserve this structure as the $\tau$-nice sampling is uniform. Thus, Let us denote the value of these two entries in diagonal and off-diagonal in the sampled inverted matrix as $A(\tau)$ and $B(\tau)$ respectively.
		
		Therefore, the value of $\mathbb{E}[\mathbf{M}_{\hat{S}}^{-1}]=\frac{\tau}{n}\bI	A(\tau)+\frac{\tau(\tau-1)}{n(n-1)}(\mathbf{S}-\bI)B(\tau)$, where $\mathbf{S}$ is matrix full of ones. To determine the values of $A$ and $B$, we compute inverse for a given $\tau$ using Cramer's rule (using matrix minors)
		
		\begin{equation}\label{eq:cramer}
			\bM^{-1}=\frac{1}{\det(\bM)}\mathbf{C}^\top,
		\end{equation}
		where $\mathbf{C}$ is matrix of cofactors. 
		
		The trick here is that we need to look only at two cofactors due to the structure of the matrix. So for $A(\tau)$ on diagonal we take $\mathbf{C}_{11}$ minor. We adopt notation where we use subscript of $\mathbf{M}$ to denote the size of the sub-matrix of $\rho$-matrix $\mathbf{M}$. We calculate determinant as a product of the known eigenvalues discussed earlier. 
		\begin{equation*}
			A(\tau)=(\mathbf{M}_{\tau}^{-1})_{11}\stackrel{\eqref{eq:cramer}}=\frac{\det(\mathbf{M}_{\tau-1})}{\det(\mathbf{M}_{\tau})}=\frac{(1-\rho)^{(\tau-2)}((\tau-2)\rho+1)}{(1-\rho)^{(\tau-1)}((\tau-1)\rho+1)}=\frac{((\tau-2)\rho+1)}{(1-\rho)((\tau-1)\rho+1)} 
		\end{equation*}
		
		When we look at $B(\tau)$ the minor of the matrix that determines any off-diagonal is a matrix which contains only $\rho$'s at first row and continues with other rows as in matrix $\mathbf{M}_\tau$. Let us denote such type of matrix $\mathbf{N}_{\tau}$, where the subscript $\tau$ symbolizes the size of the matrix again. For $\det(\mathbf{N}_{\tau})$, we arrive at the following recurrence relation by using Laplace decomposition of determinants.
		\begin{eqnarray} \label{eq:reccur}
			\det(\mathbf{N}_{\tau})& =& (1-\rho)\mathbf{N}_{\tau-1} \\
			\det(\mathbf{N}_{2})& = & \rho^2-\rho
		\end{eqnarray}
		Therefore,
		\begin{eqnarray}
			B(\tau)&=&(\mathbf{M}_{\tau}^{-1})_{12}=-\frac{\det(\mathbf{N}_{\tau})}{\det(\mathbf{M}_{\tau})}\stackrel{\eqref{eq:reccur}}=-\frac{(1-\rho)^{(\tau-2)}\rho}{(1-\rho)^{(\tau-1)}((\tau-1)\rho+1)} \notag \\&= &-\frac{\rho}{(1-\rho)((\tau-1)\rho+1)} 
		\end{eqnarray}
		
		The matrix $\mathbb{E}[\mathbf{M}_{\hat{S}}^{-1}]$ can be manipulated to the from of $\rho$-matrix by dividing by the factor $\frac{\tau A(\tau)}{n}$. Thus, we can define a new $\rho_N$ which serves as parameter of the new $\rho$-matrix. 
		
		This parameter allows us to compute the eigenvalues of the matrix, and thus, also maximal and minimal eigenvalues. From the $\sigma_1$ definition, $\sigma_1$  is the smallest eigenvalues of a product of two $\rho$-matrices one with $\rho$ as parameter and one with $\rho_N$ as the parameter. The product of two $\rho$-matrices is a $\rho$-matrix again, and as consequence of this, the smallest eigenvalue of the product is just a product of the two smallest eigenvalues ones.
		\begin{equation}
			\sigma_1=(1-\rho)(1-\rho_N)\frac{\tau A(\tau)}{n}=(1-\rho)\left(1-\frac{(\tau-1)B(\tau)}{(n-1)A(\tau)}\right)\frac{\tau A(\tau)}{n}.
		\end{equation}
		And similarly $ \theta$, and its definition imply that $ \theta$ is product of the two biggest eigenvalues
		\begin{eqnarray}
			 \theta&=&(n\rho-\rho+1)(n\rho_N-\rho_N+1)\frac{\tau A(\tau)}{n} \notag\\&=&(n\rho-\rho+1)\left(n\frac{(\tau-1)B(\tau)}{(n-1)A(\tau)}-\frac{(\tau-1)B(\tau)}{(n-1)A(\tau)}+1\right)\frac{\tau A(\tau)}{n}	.
		\end{eqnarray}	
	\end{proof}
	
	\begin{figure}[htbp]
		\centering	
		\includegraphics[width=0.80\textwidth]{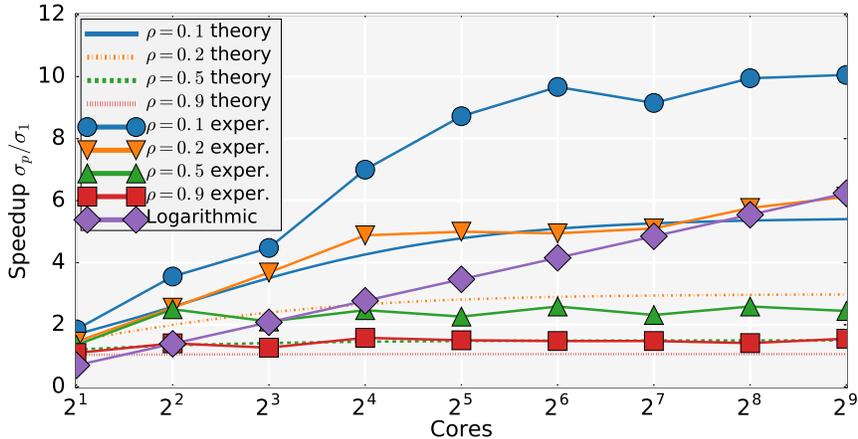}
		
		\caption{Theoretical speedup for problem of minimizing quadratic function $f(x)=\frac{1}{2}x^\top \mathbf{M} x$ with $\mathbf{M}$ being $\rho$-matrix. The size of the problem is $n=1024$, and parallel $(2,c)$-nice sampling. We observe the smaller the $\rho$, the greater the speedup. Also, we see that our estimate of speedup is rather conservative for small values of $\rho$. Intuitively, the problem should be more difficult as $\rho$ increases since $\rho$ is related to the condition number of $\mathbf{M}$, namely $\operatorname{cond}(\mathbf{M}) = \frac{n\rho - \rho-1}{1-\rho}$, which is an increasing function of $\rho$.}
		\label{fig:theoretical_comp}
	\end{figure}

	\subsection{$\alpha$-tridiagonal matrix analysis}
	
	There is only a limited number of possible special matrix structures that are simple enough to be parametrized by one parameter, and at the same time function as a useful practical model. We choose to model tridiagonal Toeplitz matrices as the next model class. As the optimization method depends only on the ratio between eigenvalues we scale the matrix such that the diagonal entries are equal to $1$. Matrices with a special structure such as banded, pentadiagonal or even tridiagonal matrices occur frequently in finite difference or finite element schemes, and are not only toy examples. 
	
	\begin{definition}[$\alpha$-tridiagonal matrix]
		Let $\alpha \in [0,\frac{1}{2})$, then $\mathbf{T}(\alpha) \in \mathbb{R}^{n \times n}$ 
		\begin{equation}
		\mathbf{T}(\alpha) = \begin{pmatrix}
		1 & \alpha & 0 & 0 & \dots & 0 \\
		\alpha & 1 & \alpha & 0 & \dots & 0 \\
		0 & \alpha & 1 & \alpha & \dots & 0 \\
		\vdots  & \vdots  & \ddots & \ddots & \ddots & \vdots \\
		0 &  \dots & 0 & 0 & \alpha & 1 \\
		\end{pmatrix}
		\end{equation}
				is a $\alpha$-tridiagonal positive-definite matrix.
	\end{definition}
	Similarly as in the previous section we minimize a quadratic function $\frac{1}{2}x^\top \mathbf{T}(\alpha)x$ and look at the theoretical speedup that we are able to achieve in Figure \ref{fig:tiger}. Due to the sparse nature of the matrix, we choose $\tau$-list matrix sampling with $\tau = 2$. The simple structure allows for explicit calculation of $\mathbb{E}[((\mathbf{T}(\alpha))_{\hat{S}})^{-1}]$, which results in pentadiagonal matrix of particular form for which eigenvalues can be efficiently computed algorithmically even for large  matrix sizes, or estimated using the following proposition. The dependence, which confirm the Proposition \ref{prop:tridiag}, is visualized in Figure \ref{fig:gull}.

	\hfil \\
	
	\begin{proposition} \label{prop:tridiag}
		Let $\mathbf{T}(\alpha)$ be a $n\times n$ $\alpha$-tridiagonal $(\alpha \in [0,\frac{1}{2}])$ matrix. Then for $2$-list parallel or serial sampling the parameter $ \theta$ is bounded from above by,
		\begin{equation}\label{eq:geo}
			 \theta \leq \frac{2}{(1-\alpha)n}.
		\end{equation}
	\end{proposition}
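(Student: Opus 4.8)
The plan is to evaluate $\mathbb{E}\bigl[(\mathbf{T}(\alpha)_{\hat S})^{-1}\bigr]$ explicitly for $2$-list sampling, rewrite the resulting matrix in terms of $\mathbf{T}(\alpha)$ itself, and then reduce the bound on $\theta$ to an elementary positive-semidefiniteness statement. First observe that $\theta$ in \eqref{eq:a_1} depends only on the marginal law of a single sampled set, so the parallel $(2,c)$-list and the serial $2$-list samplings give the same value; hence we may work with the serial $2$-list sampling, whose atoms are the $n$ consecutive pairs $S_1=\{1,2\},\dots,S_{n-1}=\{n-1,n\},S_n=\{n,1\}$, each with probability $1/n$, so that $\mathbb{E}\bigl[(\mathbf{T}(\alpha)_{\hat S})^{-1}\bigr]=\tfrac1n\sum_{i=1}^n(\mathbf{T}(\alpha)_{S_i})^{-1}$ in the notation of \eqref{eq:inverse}. (If $n=2$ the only admissible set is $\{1,2\}$, so $\mathbb{E}[(\mathbf{T}(\alpha)_{\hat S})^{-1}]=\mathbf{T}(\alpha)^{-1}$ and $\theta=1\le\tfrac{1}{1-\alpha}$; so assume $n\ge3$ below.)

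For $1\le i\le n-1$ the minor $\mathbf{T}(\alpha)_{S_iS_i}=\begin{pmatrix}1&\alpha\\\alpha&1\end{pmatrix}$ has inverse $\tfrac{1}{1-\alpha^2}\begin{pmatrix}1&-\alpha\\-\alpha&1\end{pmatrix}$, while for $n\ge3$ we have $\mathbf{T}(\alpha)_{1n}=0$, so the wrap-around minor $\mathbf{T}(\alpha)_{S_nS_n}$ is the $2\times2$ identity and $(\mathbf{T}(\alpha)_{S_n})^{-1}=e_1e_1^\top+e_ne_n^\top$. Embedding each of these back into $\R^{n\times n}$ via \eqref{eq:inverse} and summing, every diagonal entry of the sum equals $\tfrac{2}{1-\alpha^2}$ except the first and last, which equal $\tfrac{2-\alpha^2}{1-\alpha^2}$, and the only nonzero off-diagonal entries are $-\tfrac{\alpha}{1-\alpha^2}$ on the two first off-diagonals. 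Writing $\mathbf{E}:=e_1e_1^\top+e_ne_n^\top$ and letting $\mathbf{C}$ be the $\{0,1\}$ matrix with ones on the first super- and sub-diagonals, so that $\mathbf{T}(\alpha)=\mathbf{I}+\alpha\mathbf{C}$, the sum collapses to
\begin{equation*}
\mathbb{E}\bigl[(\mathbf{T}(\alpha)_{\hat S})^{-1}\bigr]=\frac{1}{n(1-\alpha^2)}\Bigl(3\mathbf{I}-\mathbf{T}(\alpha)-\alpha^2\mathbf{E}\Bigr).
\end{equation*}

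Substituting this into \eqref{eq:a_1} with $\mathbf{G}=\mathbf{M}=\mathbf{T}(\alpha)$ (positive definite by definition, so $\mathbf{T}(\alpha)^{1/2}$ exists) gives
\begin{equation*}
\theta=\frac{1}{n(1-\alpha^2)}\,\lambda_{\max}\!\Bigl(3\mathbf{T}(\alpha)-\mathbf{T}(\alpha)^2-\alpha^2\,\mathbf{T}(\alpha)^{1/2}\mathbf{E}\,\mathbf{T}(\alpha)^{1/2}\Bigr).
\end{equation*}
The matrix $\alpha^2\mathbf{T}(\alpha)^{1/2}\mathbf{E}\,\mathbf{T}(\alpha)^{1/2}$ is positive semidefinite, so it may be dropped from the $\lambda_{\max}$ at the cost of an inequality, and it remains to prove $3\mathbf{T}(\alpha)-\mathbf{T}(\alpha)^2\preceq2(1+\alpha)\mathbf{I}$. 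With $\mathbf{N}:=\mathbf{T}(\alpha)-\mathbf{I}=\alpha\mathbf{C}$ one expands $3\mathbf{T}(\alpha)-\mathbf{T}(\alpha)^2=2\mathbf{I}+\mathbf{N}-\mathbf{N}^2$, so the claim is equivalent to $\mathbf{N}^2-\mathbf{N}+2\alpha\mathbf{I}\succeq0$; this holds because $\mathbf{N}^2\succeq0$ and $2\alpha\mathbf{I}-\mathbf{N}=\alpha(2\mathbf{I}-\mathbf{C})\succeq0$, the latter since $\lambda_{\max}(\mathbf{C})\le2$ by Gershgorin. Hence $\theta\le\tfrac{2(1+\alpha)}{n(1-\alpha^2)}=\tfrac{2}{n(1-\alpha)}$.

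The only step needing real care is the closed-form evaluation of $\mathbb{E}[(\mathbf{T}(\alpha)_{\hat S})^{-1}]$, including the bookkeeping of the boundary coordinates $1$ and $n$; after that the argument is a two-line spectral estimate. It is worth noting that the cruder bound obtained by replacing each $(\mathbf{T}(\alpha)_{S_i})^{-1}$ by $\tfrac{1}{1-\alpha}$ times a coordinate projector — essentially the argument behind Proposition~\ref{prop:bound} with $\tau=2$ — loses an extra factor $\lambda_{\max}(\mathbf{T}(\alpha))\le1+2\alpha$; keeping the negative off-diagonal contributions, which is exactly what the explicit computation does, is what yields the sharp constant in \eqref{eq:geo}.
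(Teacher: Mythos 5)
Your proof is correct, and it fills in details the paper's own argument only sketches. Both you and the paper start from the same place: an explicit evaluation of $\mathbb{E}[(\mathbf{T}(\alpha)_{\hat S})^{-1}]$ for the $2$-list sampling (and you rightly note that $\theta$ depends only on the marginal law of one sampled set, so parallel and serial cases coincide, and you handle the wrap-around set $\{n,1\}$ and the $n=2$ degenerate case). Where you diverge is in the eigenvalue estimate. The paper forms the product $\mathbf{T}(\alpha)\,\mathbb{E}[(\mathbf{T}(\alpha)_{\hat S})^{-1}]$, observes its diagonal is constant (equal to $2/n$, once the boundary corrections are tracked), and applies the Gershgorin circle theorem to that nonsymmetric product, with the interior rows giving the binding disc and hence \eqref{eq:geo}. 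You instead rewrite the expected inverse as $\frac{1}{n(1-\alpha^2)}\bigl(3\mathbf{I}-\mathbf{T}(\alpha)-\alpha^2\mathbf{E}\bigr)$, drop the positive semidefinite rank-two corner term, and reduce the claim to the scalar inequality $2+\mu-\mu^2\le 2(1+\alpha)$ for the eigenvalues $\mu$ of $\alpha\mathbf{C}$, using Gershgorin only for the elementary bound $\lambda_{\max}(\mathbf{C})\le 2$. Your route buys rigor and transparency: everything after the bookkeeping is a spectral-mapping argument for commuting matrices, whereas the paper's version requires verifying the "constant diagonal" and "nested discs" claims for the product matrix, which it asserts without computation. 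The paper's version, in turn, is slightly more mechanical and generalizes a bit more directly to other banded Toeplitz cases where the expected inverse is not a polynomial in $\mathbf{M}$ plus a low-rank boundary correction. Both give exactly the constant $\frac{2}{(1-\alpha)n}$, and, as you observe, both are sharper than the generic Proposition~\ref{prop:bound} route, which would lose a factor of $\lambda_{\max}(\mathbf{T}(\alpha))$.
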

	\begin{proof}
		First we observe that the matrix $\mathbf{T}_t = \mathbb{E}[((\mathbf{T}(\alpha))_{\hat{S}})^{-1}]$ has a special tridiagonal structure with different elements only in two entries. By multiplying together $\mathbf{T}(\alpha)\mathbf{T}_t$ we obtain matrix whose eigenvalue spectrum is the same as the one of $\mathbf{T}(\alpha)^{1/2}\mathbf{T}_t\mathbf{T}(\alpha)	^{1/2}$, due to cyclic property of $\lambda_{max}$. The rest of the proof applies Gershgorin circle theorem \cite{Gerschgorin1931}, which bounds eigenvalues. The resulting matrix  $\mathbf{T}(\alpha)\mathbf{T}_t$ is constant on the diagonal and the inequalities arising from Gershgorin circle theorem are nested, thus, we are left with only one condition in \eqref{eq:geo}.
	\end{proof}
	
	The Proposition \ref{prop:tridiag} reveals that already in the worst case $\alpha = \frac{1}{2}$, we can show parallel speedup for matrices of size $n \geq 5$ even with the crude bound presented. Should, we want to solve large tridiagonal system as $\min_x \frac{1}{2}\norm{\mathbf{T}(\alpha)x - b}^2$, we can use Proposition \ref{prop:bound} with explicit formula for eigenvalues of tridiagonal matrix from \cite{Noschese2013} to bound the eigenvalues for any $n$, $\lambda_{max}(\mathbf{T}(\alpha)) \leq 1 + 2\alpha$ and $\lambda_{min}((\mathbf{T}(\alpha)))\geq 1 - 2\alpha$. Thus, the bound for $ \theta$ becomes $ \theta \leq \frac{\tau}{n}\frac{(1+2\alpha)^2}{(1-2\alpha)^2}$, which for sufficiently big $n$ leads to theoretical parallel speedup again.
	
	The utilization of PSN for tridiagonal matrices for small problems is unlikely as there exists a very efficient $O(n)$ algorithm known in literature as Thomas algorithm \cite{Thomas1949}. However similar approaches, as presented here, can be applied to general banded matrices if they have special structure, and the bound on eigenvalues can be provided by analytical means or via Gershgorin circle theorem.
		
		\begin{figure}
			\centering
			
			\begin{subfigure}[t]{0.80\textwidth}
				\includegraphics[width=\textwidth]{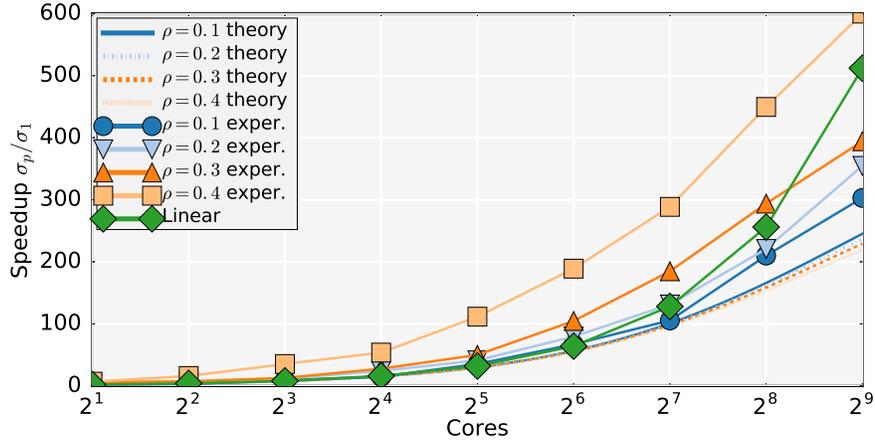}
				\caption{Theoretical and practical speedup for the class of $\alpha$-tridiagonal matrices. }
				\label{fig:gull}
			\end{subfigure}
			\hfil
			\begin{subfigure}[t]{0.80\textwidth}
				\includegraphics[width=\textwidth]{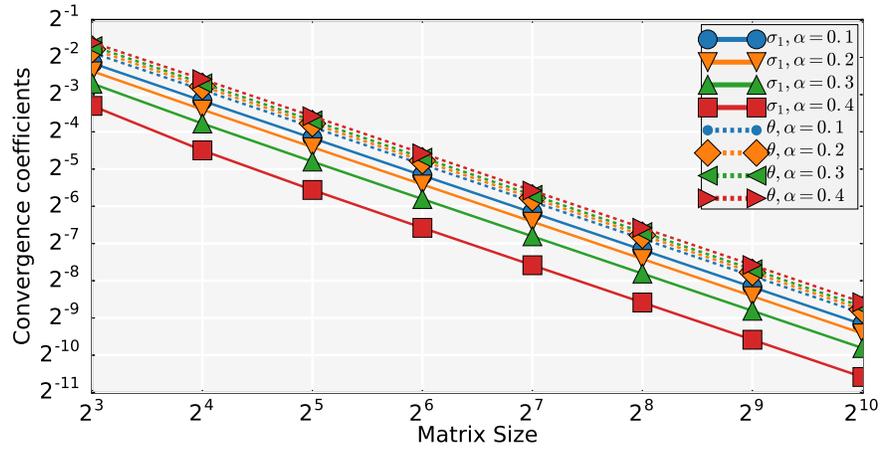}
				\caption{Values of $\sigma_1$ and $ \theta$ for $\alpha$-tridiagonal matrices.}
				\label{fig:tiger}
			\end{subfigure}

			\caption{Properties of problems with $\alpha$-tridiagonal matrices.}
			\label{fig:1}
		\end{figure}

	\section{Numerical performance}
	We performed two experiments to demonstrate the parallelization speedup of the algorithm in practice. We first compare the serial and the parallel method on minimization of $\frac{1}{2}\norm{\bX x - y}^2$ with respect to $x$, where $\bX$  is fully dense, and show superior convergence properties in terms of iteration. We report empirical speedup of the methods in Figure \ref{fig:fig4}. The $\theta$ value for this experiment was handpicked to be $0.7$.
	
	 Secondly, we compare our parallel method with PCDM. While the PSN was run with specific parallel $(\tau,c)$-nice sampling all the PCDM codes were run with $\tau c$-nice sampling to ensure fair comparison. The PSN algorithm was run with a handpicked value of $ \theta$ equal to $0.7$ in the experiments below. The main comparison of quadratic function minimization as in the previous experiment is presented in Figure \ref{fig:gull2}. We report moderate improvement on \emph{mushroom} dataset in Figure \ref{fig:tiger2}. 
	 
	All artificial data for the dense matrix $\bX$ and $y$ were generated by sampling standard normal distribution. The experiments were run on Lenovo T450S Intel Core i7 (Broadwell) with 2.6 GHz cores, and the C++ code was compiled with GCC compiler version 5 and  directives \emph{-O3 -mfma}. 	
	\begin{figure}
		\centering
		
		\begin{subfigure}{0.80\textwidth}
			\includegraphics[width=\textwidth]{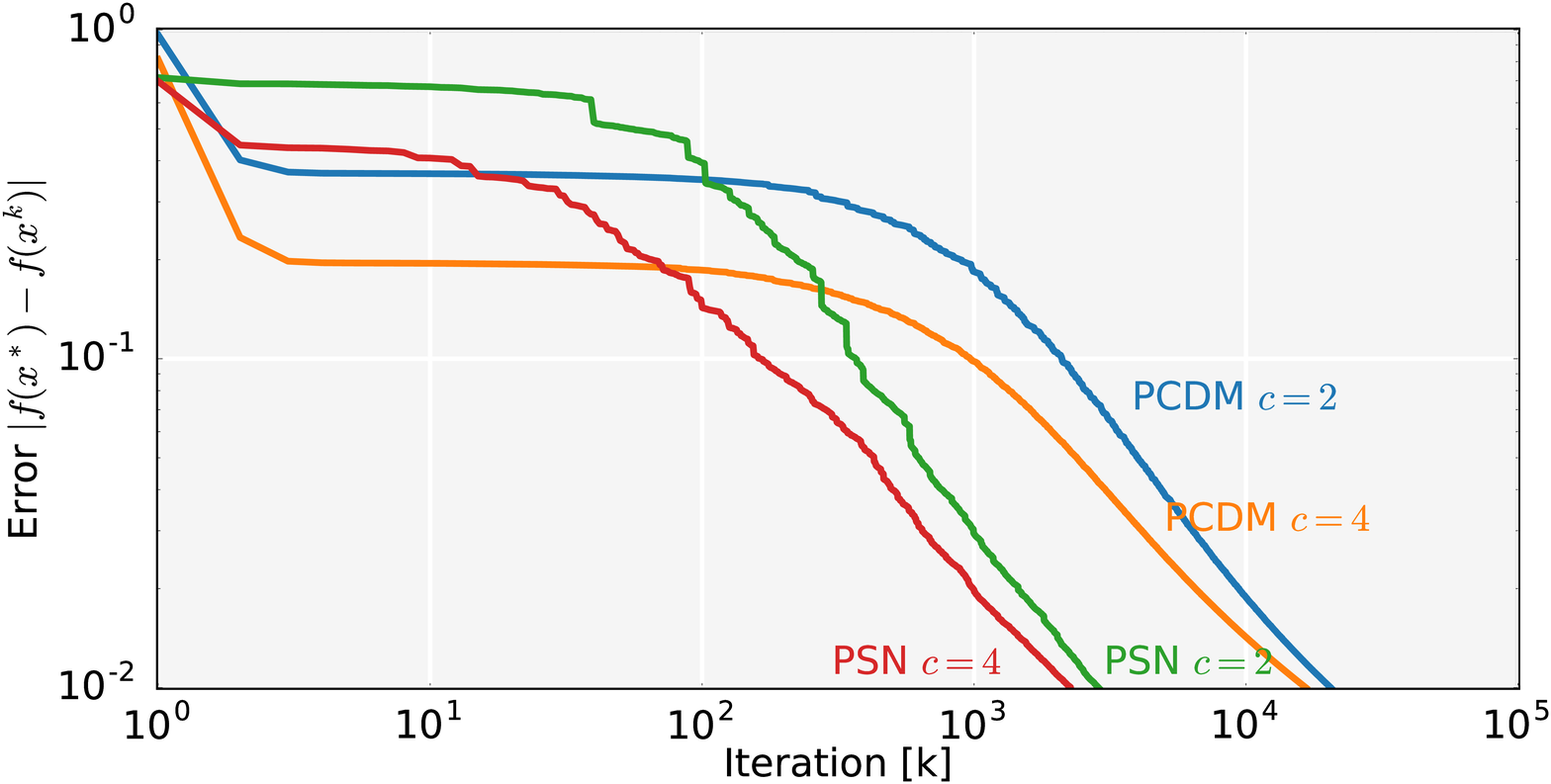}
			\caption{A linear regression model with an artificial dataset such that $n = m = 10^3 $, and $(3,c)$-nice sampling. The parameter $c$ in the graph represents the number of computational units. }
			\label{fig:gull2}
		\end{subfigure}
		\hfil
		\begin{subfigure}{0.80\textwidth}
			\includegraphics[width=\textwidth]{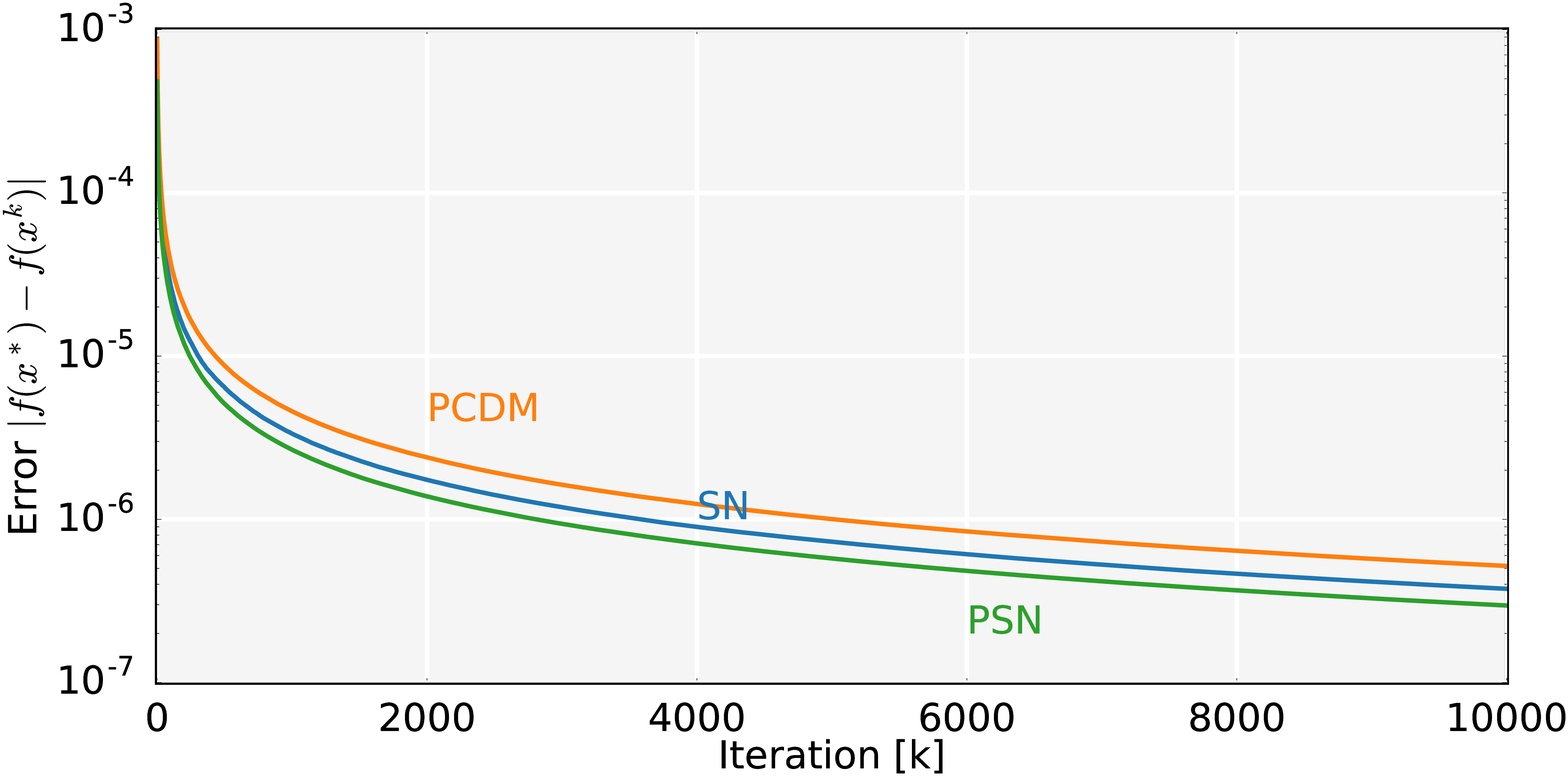}
			\caption{A logistic regression model based on \emph{mushroom} dataset with $n = 8124$, $m = 22$ and $(2,c)$-nice sampling. We run PSN with $1$ (SN) and $4$ cores, and PCDM with $\tau c$-nice sampling with the constant $\tau c = 8$. We scaled the eigenvalues of the problem before optimization.}
			\label{fig:tiger2}
		\end{subfigure}
		\caption{Performance of PSN versus PCDM on linear and logistic regression problems. }
		\label{fig:3}
	\end{figure} 
	
	\begin{figure}
		\centering
		\includegraphics[width=0.80\textwidth]{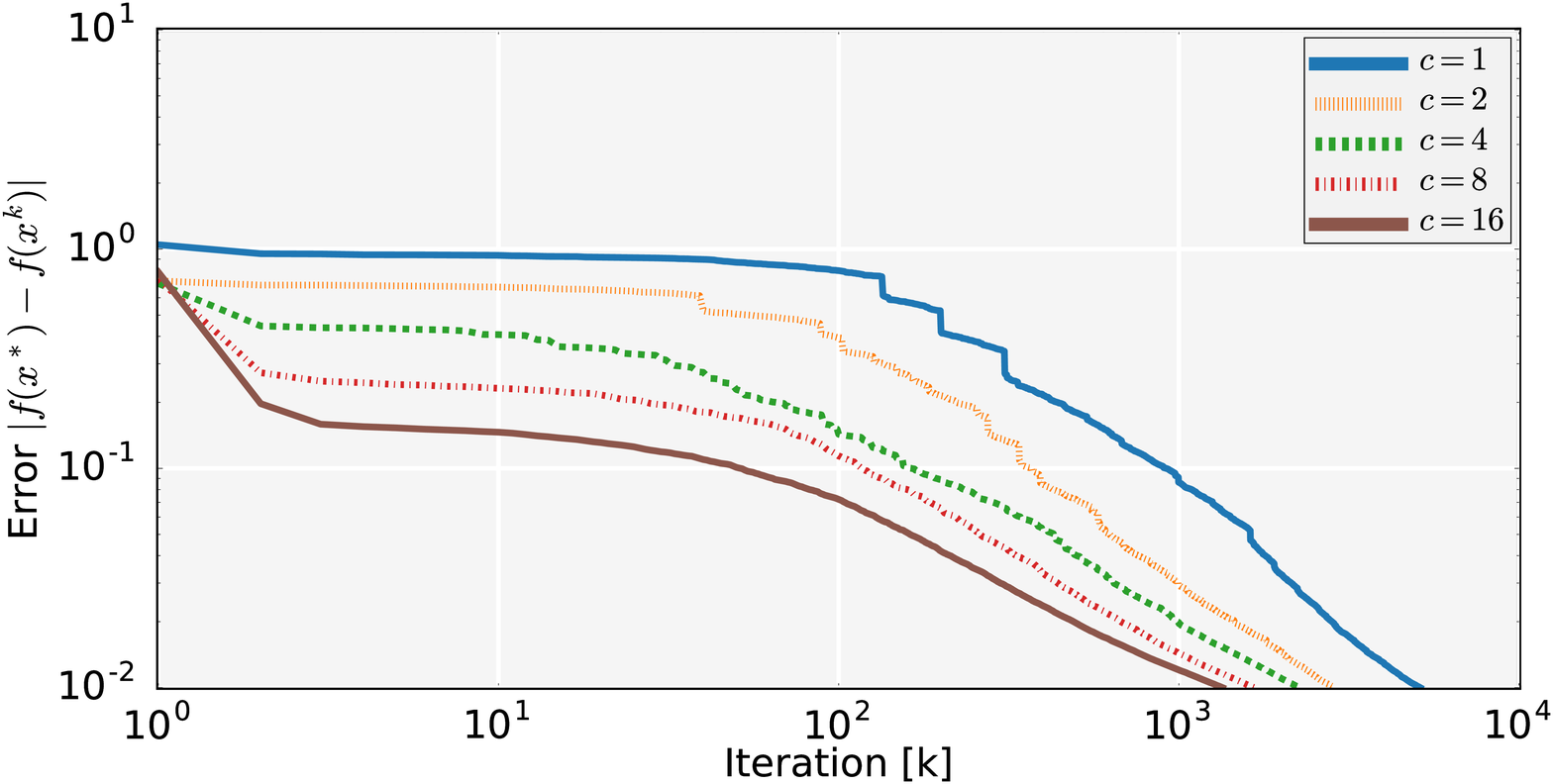}
		\caption{Comparison of a serial and a parallel stochastic Newton method. We run the parallel method with a different number of computational units. In all cases, we fit a linear regression with $n=m=10^3$ examples and variables. We utilize the $(3,c)$-nice sampling.}
		\label{fig:fig4}
	\end{figure}

	In addition, we perform a numerical experiment that arises in numerical analysis when solving the heat equation with finite differences using a high-order scheme. Namely, we simulate $\partial_t u(x,t) = \partial_{xx}u(x,t)$ on $x \in [-L,L]$ from $t \in [0,T]$. The initial condition is $u(x,0) = \cos(\frac{\pi x}{2L})$. We use the following finite difference scheme to discretize the equations
	
	\begin{eqnarray*}
		\partial_t u(x,t) & =  & \frac{u(x,t+\Delta t/2)- u(x,t-\Delta t/2)}{\Delta t} \\
		\partial_{xx} u(x,t) & = & \frac{  -\frac{1}{12} u(x+2h,t) + \frac{4}{3} u(x+h,t)  -\frac{5}{2} u(x,t) +\frac{4}{3} u(x-2h,t) -\frac{1}{12} u(x-h,t)       }{h}.
	\end{eqnarray*}
	
	Rearranging, the equations using the implicit finite difference scheme we arrive at linear system with a penta-diagonal design matrix $\bM$ to be solved at each time step. In order to estimate the $\theta$, we use Proposition \ref{prop:bound} and estimate the bound on condition number using Gershgorin circle theorem which gives us $\operatorname{cond}(\bM) \leq 1.68$. Consequently, for $5$-list parallel sampling we pick $\theta = \frac{8.4}{n}$. The optimization process is investigated in Figure \ref{fig:penta}, and confirms that the convergence is super-linear in the later stages of the optimization (cca. $t>2^4$), and that with the increasing number of cores, the speedup achieved is nearly linear.

		\begin{figure}
			\centering
			\includegraphics[width=0.80\textwidth]{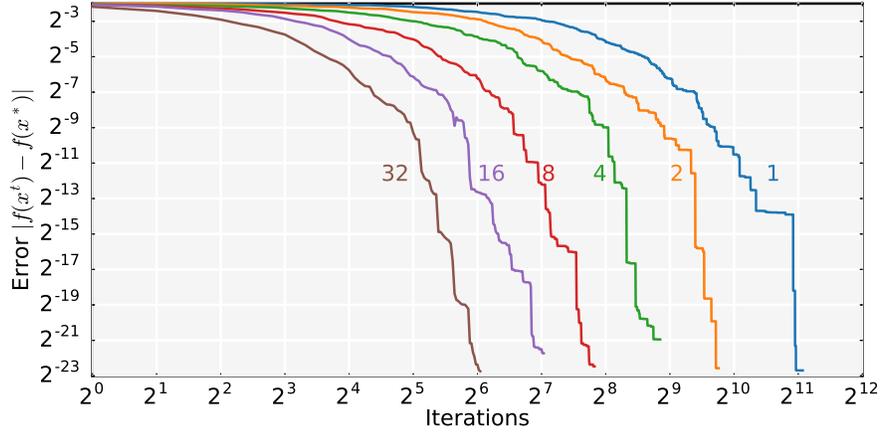}
			\caption{The number of iterations required to solve the linear system ($n = 10^3$) from the implicit finite difference scheme to the accuracy $10^{-8}$. The labels in the graph represents the number of computational units. }
			\label{fig:penta}
		\end{figure}

	\section{Empirical Risk Minimization in Parallel Settings}
	A specific application of this optimization method is optimizing the error function of statistical estimation called Empirical Risk Minimization (ERM). This was the main application area of the algorithm in \cite{SDNA} and \cite{Shalev-Shwartz2013} among many others. We reproduce for the sake of readers convenience the modifications to ERM formulations needed such that PSN can be directly applied.
	
	Many empirical risk minimization (ERM) problems can be cast as minimization of 
	\begin{eqnarray}\label{eq:primal}
		\min_{w \in \mathbb{R}^d} \left[ P(w) \eqdef \frac{1}{n} \sum_{i=1}^{n} \phi_i(a_i^\top w) + \lambda g(w) \right].
	\end{eqnarray}
	We assume $g$ is $1-$strongly convex with respect to $l_2$ norm and $\phi_i: \mathbb{R} \rightarrow \mathbb{R}$ are $\gamma_i$-strongly convex and smooth. The vector $a_i$ represents a feature vector of data point $i$.
	
	Using Fenchel duality theory, we are able to derive a dual optimization problem to the one in \eqref{eq:primal}. Fenchel conjugate function of $g$ is denoted $g^*$ and defined via $g^*(s) \eqdef \sup_{w\in \mathbb{R}^d} \braket{w,s} - g(w)$ in this work. Given this definition, we are able to formulate the dual problem where the solution is equivalent to the primal problem given strong duality holds.
	
	\begin{eqnarray} \label{eq:dual}
		\max_{\alpha \in \mathbb{R}^n} \left[   D(\alpha) \eqdef \frac{1}{n} \sum_{i=1}^{n} -\phi_i^*(-\alpha_i) - \lambda g^*\left(\frac{1}{n\lambda}\mathbf{A}\alpha\right) \right]
	\end{eqnarray}
	
	Also, we want to mention that under strong duality conditions the relation between primal and dual variables can be simply expressed as
	\begin{equation}
		w = w(\alpha ) = \nabla f (\mathbf{A}\lambda),
	\end{equation}
	and consequently $w(\alpha^*) = w^*$ where the star denotes the optimal solution to the optimization problems. 
	
	Redefining the functions in expression \eqref{eq:dual} by choosing $f(\alpha) = \lambda g^* (\frac{1}{\lambda n}\mathbf{A}\lambda)$ and $\psi_i (\alpha_i)= \frac{1}{n} \phi_i^*(-\alpha_i)$, and exchanging maximization for minimization, we yield the following problem
	
	\begin{equation}
		\min_{\alpha \in \mathbb{R}^n} \left[ F(\alpha) \eqdef f(\alpha) + \sum_{i=1}^{n} \psi_i(\alpha_i)\right],
	\end{equation}
	where $f$ satisfies the Assumption \ref{ass:smooth} and $\psi_i$ is strongly convex and smooth with the constant $\frac{1}{\gamma_i n}$. Thus, we see that the assumptions needed to apply Algorithm \ref{alg:Method2} are satisfied and we can define a specialized Algorithm \ref{alg:Method2} for this particular problem. 
	
	 Due to duality theory, $g$ being $1$-strongly convex implies that $g^*$ has $1$-Lipschitz continuous gradient \cite{Duenner2016}, and thus $\nabla^2 g^*(x) \preceq \mathbf{I}_{d\times d}$. Consequently, this implies that for $f(\alpha)$ defined via $g^*$ we have $\nabla^2 f(\alpha) \preceq \frac{1}{\lambda n^2} \mathbf{A}^\top\mathbf{A}$. Therefore, in this case, summing the two matrices to obtrain matrix for Assumption \ref{ass:smooth}, we get $\bX \eqdef \frac{1}{\lambda n^2} \mathbf{A}^\top\mathbf{A} +  \frac{\bD(\gamma)^{-1}}{n} $.
	
	\begin{algorithm}
		\caption{PSN: Parallel Stochastic Netwon for ERM}
		\label{alg:Method2}
		\renewcommand{\algorithmicrequire}{\textbf{Parameters:}}
		\renewcommand{\algorithmicensure}{\textbf{Initialization:}}
		\begin{algorithmic}[1]
			\Require Parallel Sampling $\hat{S}$, aggregation parameter $b$. 
			\Ensure Pick $\alpha_0 \in \mathbb{R}^n$ and $\bar{\alpha}_0 = \frac{1}{\lambda n } \mathbf{A}$
			\For{ $k = 0,1,2, \dots $ }
			\State Primal update: $w^k = \nabla g^*(\bar{\alpha}_k)$
			\State Generate a random set of sets $S_{k}$ distributed according to $\hat{S}$
			\For{ $j = 0, \dots, c $}
			\State $h^{k_j} \leftarrow \arg\min _{h \in \mathbb{R}^n} \braket{I_{S^k_j}\mathbf{A}^\top w^k,h} + \frac{1}{2}\braket{h,\mathbf{X}_{S^k_j}h} + \sum_{i \in S^k_j} \nabla \psi_i(\alpha^k_i)h_i$
			\EndFor
			\State Dual update:    $\alpha^{k+1} \leftarrow \alpha^k + \frac{1}{b}\sum_{j=1}^{c} h^{k_j}$ 
			\State Average update: $\bar{\alpha}^{k+1} \leftarrow \alpha^k + \frac{1}{n\lambda b} \sum_{j=1}^{c} \bA^\top h^{k_j}$
			\EndFor
			
		\end{algorithmic}
	\end{algorithm}
	As we have just applied the previous Algorithm \ref{alg:Method2} to solve a specific problem in \eqref{eq:dual}, the convergence rates are the same, where we just need to replace $\bM$ with $\bX$ in the Theorem \ref{theorem:Parallel_Method_1}. We do not present a bound on the duality gap due to technical difficulties, but we conjecture that the bound presented in \cite{SDNA} most likely holds.

%
%

	\section{Conclusion}
	We presented a novel way of parallelizing an existing algorithm called stochastic Newton (SN) introduced in \cite{SDNA}, which utilizes curvature information in data, should it be applied to statistical estimation problems, to improve on previous optimization methods. We prove that converge guarantees can be matched or improved over the serial version of the algorithm. The algorithm performs better than its coordinate counterpart parallel coordinate descent method (PCDM) both in theory and in practice. We demonstrated cases when the parallel version enjoys theoretical speedup over serial in special cases.
	\hfil \\ 
\noindent {\bf Acknowledgments.} The first author would like to thank EPRSC Vacational scholarship of the University of Edinburgh for supporting this work. The second author would like acknowledge support through EPSRC Early Career Fellowship in Mathematical Sciences.

\bibliography{bib.bib}   

\newpage	
\appendix
\section{Proof of Theorem \ref{theorem:Parallel_Method_1}}
	To handle the tedious expressions in the following theorem, we introduce the following notational shorthands.
	\begin{equation}\label{eq:x}
	\bX:=\mathbb{E}[(\mathbf{M}_{\hat{S}})^{-1}]^{1/2}\mathbf{G}\mathbb{E}[(\mathbf{M}_{\hat{S}})^{-1}]^{1/2}
	\end{equation}
	\begin{equation} \label{eq:defk}
	g^k:= \nabla f(x^k)
	\end{equation}
	\begin{equation} \label{eq:defz}
	z^k:= (\mathbb{E}[(\mathbf{M}_{\hat{S}})^{-1}])^{1/2}\nabla f(x^k)
	\end{equation}
	\begin{equation} \label{eq:y}
	\bY:=\mathbb{E}[(\mathbf{M}_{\hat{S}})^{-1}]^{1/2} \mathbf{M}  \mathbb{E}[(\mathbf{M}_{\hat{S}})^{-1}]^{1/2}
	\end{equation}
	
Recalling \eqref{eq:smooth} we get
		\begin{align}
			f(x^{k+1}) = f(x^k+h^k) \stackrel{\eqref{eq:smooth}} \leq f(x^k) + \braket{g^k,h^k} + \frac{1}{2}\braket{h^k,\mathbf{M}h^k}
		\end{align}
		Thus,
		\begin{eqnarray}
			f(x^{k+1})-f(x^k)  & \leq &  \braket{g^k,h^k} + \frac{1}{2}\braket{h^k,\mathbf{M}h^k} \\
			&\stackrel{\eqref{eq:update}}  =  &  -\frac{1}{b}\braket{g^k,\sum_{i=1}^{c}(\mathbf{M}_{S^k_i})^{-1} g^k} + \frac{1}{2b^2}\braket{\mathbf{M}\sum_{j=1}^{c}(\mathbf{M}_{S^k_j})^{-1} g^k,\sum_{i=1}^{c}(\mathbf{M}_{S^k_i})^{-1} g^k}  \\
			&\stackrel{\eqref{eq:slice}}  =  &  -\frac{1}{b}\braket{g^k,\sum_{i=1}^{c}(\mathbf{M}_{S^k_i})^{-1} g^k} + \frac{1}{2b^2}\braket{\mathbf{M}\sum_{j=1}^{c}\mathbf{I}_{S^k_j}(\mathbf{M}_{S^k_j})^{-1} g^k,\sum_{i=1}^{c}\mathbf{I}_{S^k_i}(\mathbf{M}_{S^k_i})^{-1} g^k} \notag  \\
			& = &  -\frac{1}{b}\sum_{i=1}^{c}\braket{g^k,(\mathbf{M}_{S^k_i})^{-1} g^k} + \frac{1}{2b^2}\sum_{j=1}^{c}\sum_{i=1}^{c}\braket{\mathbf{M}\mathbf{I}_{S^k_j}(\mathbf{M}_{S^k_j})^{-1} g^k,\mathbf{I}_{S^k_i}(\mathbf{M}_{S^k_i})^{-1} g^k}  \notag
		\end{eqnarray}
		We split the last term
		\begin{eqnarray}
			f(x^{k+1})-f(x^k)  & \leq & -\frac{1}{b}\sum_{i=1}^{c}\braket{g^k,(\mathbf{M}_{S_{k_i}})^{-1} g^k} + \frac{1}{2b^2}\sum_{i=1}^{c}\braket{\mathbf{M}\mathbf{I}_{S_{k_i}}(\mathbf{M}_{S_{k_i}})^{-1}g^k,\mathbf{I}_{S_{k_i}}(\mathbf{M}_{S_{k_i}})^{-1} g^k} \notag\\
			&& + \frac{1}{2b^2}\sum_{j\neq i}^{c}\braket{\mathbf{M}\mathbf{I}_{S^k_j}(\mathbf{M}_{S^k_j})^{-1}g^k,\mathbf{I}_{S^k_i}(\mathbf{M}_{S^k_i})^{-1} g^k}\notag\\
			& \stackrel{\eqref{eq:inverse},\eqref{eq:slice}}  = & -\frac{1}{b}\sum_{i=1}^{c}\braket{g^k,(\mathbf{M}_{S^k_i})^{-1} g^k} + \frac{1}{2b^2}\sum_{i=1}^{c}\braket{g^k,(\mathbf{M}_{S^k_i})^{-1} g^k} \\
			&& + \frac{1}{2b^2}\sum_{j\neq i}^{c}\braket{\mathbf{M}(\mathbf{M}_{S^k_j})^{-1}g^k,(\mathbf{M}_{S^k_i})^{-1} g^k}
		\end{eqnarray}
		\begin{eqnarray}
			\mathbb{E}[f(x^{k+1})-f(x^k)] &\stackrel{\eqref{eq:tower_prop}}\leq & -\left(\frac{1}{b}-\frac{1}{2b^2}\right)\sum_{i=1}^{c}\braket{g^k,\mathbb{E}[(\mathbf{M}_{S})^{-1}
				] g^k} \\ && + \frac{1}{2b^2}\sum_{j\neq i}^{c}\braket{\mathbf{M}\mathbb{E}[(\mathbf{M}_{\hat{S}})^{-1}]g^k,\mathbb{E}[(\mathbf{M}_{\hat{S}})^{-1}
				] g^k} \\
			& \stackrel{\eqref{eq:defz}}= & -\left(\frac{c}{b}-\frac{c}{2b^2}\right)\braket{z^k,z^k}  + \frac{c^2-c}{2b^2}\braket{\mathbf{M}\mathbb{E}[(\mathbf{M}_{\hat{S}})^{-1}]^{1/2} z^k,\mathbb{E}[(\mathbf{M}_{\hat{S}})^{-1}
				]^{1/2} z^k} \notag \\ & \stackrel{\eqref{eq:y}} =  &-\left(\frac{c}{b}-\frac{c}{2b^2}\right)\braket{z^k,z^k}  + \frac{c^2-c}{2b^2}\braket{ z^k,\mathbf{Y} z^k} 
			\\ & \stackrel{\eqref{eq:x},\eqref{eq:lambda}} \leq &-\left(\frac{c}{b}-\frac{c}{2b^2}\right)\braket{z^k,z^k}  + \frac{\lambda(c^2-c)}{2b^2}\braket{ z^k,\mathbf{X} z^k} 
			\\ & \stackrel{\eqref{eq:a_1}} \leq &-\left(\frac{c}{b}-\frac{c}{2b^2}\right)\braket{z^k,z^k}  + \frac{\lambda \theta (c^2-c)}{2b^2}\braket{ z^k, z^k}
			\\ & \stackrel{\eqref{eq:b}} \leq &-\frac{c}{2b}\braket{ z^k, z^k} \label{eq:neededforproof}
			\\ & \stackrel{\eqref{eq:defz},\eqref{eq:sigma_1}} \leq &-\frac{c}{2b} \sigma_1 \braket{ g^k, \mathbf{G}^{-1} g^k}
			\\ & \stackrel{\eqref{eq:strgcnvx2},\eqref{eq:sigma_p}}\leq & -\sigma_p(f(x^k)-f(x^*))
		\end{eqnarray}

	\begin{remark}
		When $c=1$ we get back to case described by Theorem \ref{theorem:Serial_Method_1}.	
	\end{remark}

\end{document}